\def\bA{\mathbb A}
\def\bC{\mathbb C} 
\def\bP{\mathbb P}
\def\sA{\mathscr A}
\def\sF{\mathscr F}
\def\sG{\mathscr G}
\def\sI{\mathscr I}
\def\sJ{\mathscr J}
\def\sK{\mathscr K}
\def\sL{\mathscr L}
\def\sN{\mathscr N}
\def\sO{\mathscr O}
\def\sP{\mathscr P}
\def\sR{\mathscr R}
\def\sS{\mathscr S}
\def\sT{\mathscr T}
\def\sW{\mathscr W}
\def\cY{{\mathcal Y}}
\def\cD{{\mathcal D}}
\def\bk{\mathrm{k}}
\def\coker{\mathrm{coker\,}}
\def\image{\mathrm{im\,}}
\def\rk{\mathrm{rk}}
\def\length{\mathrm{l}}
\def\bzero{\mathbf{0}}
\def\lra{\longrightarrow}
\def\gm{\mathrm{G_m}}
\def\gmn{\mathrm{G_m^n}}
\def\spec{\mathrm{Spec}\,}
\def\PGL{\mathrm{PGL}}
\def\Hom{\mathrm{Hom}} 
\def\sTor{\mathscr{T}\! or}
\def\sExt{\mathscr{E}\!xt}
\def\tor{\mathrm{tor}}
\def\tf{\mathrm{t.f.}}
\def\dual{^{\scriptscriptstyle \vee}}
\def\M{\mathrm{M}}
\def\fP{{\mathfrak P}}  
\def\Quot{\mathrm {Quot}}
\def\Hilb{\mathrm{Hilb}}
\newtheorem{thm}{Theorem}[section]
\newtheorem{defn}[thm]{Definition}
\newtheorem{lemm}[thm]{Lemma}
\newtheorem{prop}[thm]{Proposition}
\newtheorem{rmk}[thm]{Remark}
\title[Complete stable pairs]{Moduli space of complete stable pairs}
\author{Baosen Wu}
\email{bswu@tsinghua.edu.cn}
\address{Department of Mathematical Sciences\\ Tsinghua University\\ Beijing 100084\\ P.R. China}
\begin{document}

\pagestyle{plain}
\baselineskip=15pt

\begin{abstract}
We define complete stable pairs on a smooth projective variety, and construct their moduli space. These moduli spaces have natural morphisms to the moduli of stable pairs and Quot-schemes. As an example, we show that the moduli of complete stable pairs on $\bP^1$ is an iterated blowing-up of the moduli of stable pairs, similar to the construction of the space of complete collineations. 
\end{abstract}


\maketitle


\section*{Introduction}

Let $X$ be a smooth complex projective variety. Let $V$ be a finite dimensional complex vector space. There is a moduli space parameterizing stable pairs $(\sF,\psi)$ where $\sF$ is a coherent sheaf on $X$ and 
\[  \psi: V\otimes \sO_{X} \lra \sF
\]
is a homomorphism satisfying certain stability condition. One of the important examples is stable pair of limit stability in which $\sF$ is a pure sheaf of dimension one and $\coker\psi$ is a sheaf with finite support. When $X$ is a Calabi-Yau $3$-fold, R. Pandharipande and R. P. Thomas \cite{PT} constructed stable pair invariants from such moduli spaces, they are fundamental in the study of DT-invariants of Calabi-Yau $3$-folds \cite{PT2}\cite{T09}.

In this paper we shall study complete stable pairs. A complete stable pair on $X$ is a sequence of nonzero homomorphisms 
\begin{align*} 
&\psi_0:  V\otimes \sO_X\to \sF, \\  
&\psi_1:  \ker \psi_0  \to\coker\psi_0, \\  
&\hskip 50pt \vdots  \\  
&\psi_m:  \ker \psi_{m-1}  \to\coker\psi_{m-1}  
\end{align*}
so that $(\sF, \psi_0)$ is a stable pair, and $\psi_m$ is the first surjection among $\psi_0, \cdots,\psi_m$.   

The idea of complete stable pairs comes from complete collineations or complete conics. The space of complete collineations was introduced in the early development of enumerative geometry.  It was used to solve many intricate classical geometric counting problems. In the late 20th century, the space of complete collineations was reformulated and generalized using modern algebraic geometric language. For the details see \cite{CP}\cite{Kau}\cite{Lak}\cite{MTh} and references therein. 

To study the moduli problem for complete stable pairs, one important step is to define flat families of such objects. In order to solve this issue, we use the stack of expanded pairs developed by Jun Li  \cite{Li01} in his proof of degeneration formula of GW-invariants. In this paper, the expanded pair we will use is a chain of $\bP^1$'s 
\[  \bP^1_{[m]}:=\bP^1\cup \Delta_1\cup \cdots \cup\Delta_m
\] 
where each $\Delta_i$ is a $\bP^1$ and they are glued transversely in the way that the point $\infty$ on $\Delta_i$ is identified with the point $0$ on $\Delta_{i+1}$; moreover, we fix a linear $\gm$-action on $\bP^1_{[m]}$ which play an essential role in the construction. For any complete stable pair $(\sF,\psi_0,\cdots,\psi_m)$ on $X$, we construct a $\gm$-equivariant sheaf $\widetilde\sF$ on $X\times \bP^1_{[m]}$ which is flat over $\bP^1_{[m]}$, and a $\gm$-equivariant homomorphism 
\[ \widetilde\varphi: V\otimes \sO_{X\times \bP^1_{[m]}}\to \widetilde\sF.
\]
We call such pair $(\widetilde\sF, \widetilde\varphi)$ on $X\times \bP^1_{[m]}$ an expanded stable pair.  Then we can define flat families of expanded stable pairs and formulate the moduli problem. The main result of this paper is 

\begin{thm} There is a proper algebraic space $\fP_{X}^h(V)$ parameterizing equivalence classes of complete stable pairs on $X$ with Hilbert polynomial $h$. 
\end{thm}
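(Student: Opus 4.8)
The plan is to construct $\fP_X^h(V)$ in three stages: first establish boundedness so the moduli functor has finite type, then exhibit it as an algebraic space by a relative construction over the moduli of stable pairs, and finally verify properness through the valuative criterion, where the expanded targets $\bP^1_{[m]}$ do the essential work.

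\textbf{Boundedness.} First I would show that complete stable pairs with fixed Hilbert polynomial $h$ form a bounded family. The underlying stable pairs $(\sF,\psi_0)$ already form a bounded family by the existing theory of moduli of stable pairs, so the Hilbert polynomials of $\ker\psi_0$ and $\coker\psi_0$ are determined by $h$. Since $\coker\psi_0$ has finite support in the limit-stability setting, its length is bounded, and consequently the sheaves $\ker\psi_i$, $\coker\psi_i$ and the homomorphisms $\psi_i$ appearing further down the sequence all have bounded invariants; in particular the length $m$ of the chain is bounded. Boundedness then lets me work over a parameter scheme of finite type.

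\textbf{Algebraicity.} To realize the moduli as an algebraic space, I would use the $\gm$-equivariant reformulation: a complete stable pair is encoded by an expanded stable pair $(\widetilde\sF,\widetilde\varphi)$ on $X\times\bP^1_{[m]}$, flat over $\bP^1_{[m]}$, together with its $\gm$-action. Working relatively over Jun Li's stack of expanded pairs, whose total space carries an étale atlas, I would construct the parameter space of such flat families as a locally closed subscheme of a relative $\Quot$-scheme, bounding the quotient $\coker\widetilde\varphi$, over the moduli space $\M_X^h(V)$ of stable pairs. The equivalence relation is generated by the $\gm$-action rescaling the expansion together with the automorphisms of $\bP^1_{[m]}$ fixing $\bP^1$; taking the quotient by this smooth groupoid yields an algebraic space $\fP_X^h(V)$, with the natural morphisms to $\M_X^h(V)$ and to the relevant Quot-schemes.

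\textbf{Properness.} The main obstacle is properness, which I would establish via the valuative criterion. Given a DVR $R$ with fraction field $K$ and a complete stable pair over $\spec K$, the issue is that the naive flat limit of $(\sF,\psi_0)$ over $\spec R$ may fail to be a stable pair: the cokernel can acquire a one-dimensional component or the map $\psi_0$ can degenerate, exactly as the rank of a collineation drops in a degenerating family. The role of the expansion is to absorb this degeneration: after rescaling the family by the $\gm$-action and bubbling off the lost data onto successive components $\Delta_1,\dots,\Delta_m$ of the chain, one recovers a flat expanded stable pair over $\spec R$ whose special fiber is a genuine complete stable pair. Proving existence of this limit amounts to a careful analysis of the associated graded of $\coker\psi_0$ under the torus action, paralleling the iterated blowing-up description of complete collineations; proving its uniqueness up to the equivalence relation above gives separatedness. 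Together with boundedness, these yield that $\fP_X^h(V)$ is a proper algebraic space.
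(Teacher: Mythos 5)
Your overall architecture --- boundedness from the underlying stable pairs, algebraicity via the $\gm$-equivariant expanded-pair reformulation over Jun Li's stack of expanded pairs, properness by valuative criterion with bubbling onto a chain of $\bP^1$'s --- is the same as the paper's. But there are genuine gaps at precisely the two places where the paper has to work hardest.

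First, algebraicity: you take a quotient by a smooth groupoid and assert the result is an algebraic space. That only yields an algebraic stack; to get an algebraic space one needs to know that expanded stable pairs have \emph{no nontrivial automorphisms}. The paper proves exactly this (its Proposition 1.7: the restriction to $Y_0$ is a stable pair, hence rigid, and $\gm$-equivariance together with surjectivity along $D_m$ rigidifies the rest), and the group acting on its atlas is $\PGL(W)\times\gmn$ arising from the Quot-scheme rigidification, not merely the torus rescaling the chain. Without the automorphism statement, your quotient could have stabilizers and the theorem as stated would not follow.

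Second, properness. Your diagnosis of the difficulty is slightly off: since the moduli of stable pairs is itself proper, the limit of $(\sF,\psi_0)$ always \emph{does} exist as a stable pair; what fails in the limit is surjectivity of $\psi_0$ (the length of $\coker\psi_0$ jumps), and this is what the expansion must absorb. For existence, the paper gives a concrete mechanism that your ``analysis of the associated graded'' only gestures at: pull the stable-pair limit back to $Y\times\Delta$, blow up along $D_+\times 0$, replace the pulled-back sheaf by the kernel of its surjection onto $p_1^*\coker\psi_0$ (an elementary modification), test surjectivity on the new distinguished divisor, and iterate; termination holds because the length of the cokernel strictly decreases. More seriously, your separatedness is pure assertion, while it is the most delicate part of the paper's proof: one pulls both extensions back to a common dominating expanded pair, observes that both sheaves embed in $p^*\sF$ where $\sF$ is the unique stable-pair limit, forms the sum $\widetilde\sS_1+\widetilde\sS_2$ (still an expanded stable pair, by the admissibility lemma and the criterion for expanded stable pairs), reduces to an inclusion $\widetilde\sS_1\subseteq\widetilde\sS_2$, and concludes equality from separatedness of stable pairs on $D_{-1}$ together with surjectivity on $D_n$; finally one must check that the two expanded pairs collapse the same set of components, which uses nontriviality of the admissible sheaves on each bubble. ``Uniqueness up to the equivalence relation'' is exactly the point at issue, not something that follows from the construction, so as written the properness argument is incomplete.
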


Let $\M_X^h(V)$ be the moduli space of stable pairs $\psi: V\otimes \sO_{X} \to \sF$ on $X$ with Hilbert polynomial $h$. Let $\Quot_X^h(V)$ be the Quot-scheme parameterizing quotients $\rho: V\otimes\sO_X\to \sF$ on $X$ with Hilbert polynomial $h$.  Then there are natural morphisms 
\[ \fP_{X}^h(V)\to \M_X^h(V), \quad  \fP_{X}^h(V)\to \Quot_X^h(V).
\] 
In some special cases, e.g., $X=\bP^1$, these morphisms are iterated blowing-up along smooth centers. It is similar to the blowing-up construction of the space of complete collineations \cite{V}. The precise relation can be used to compute invariants on Quot-schemes which will appear in subsequent work. In general, these moduli spaces are very singular, however, $\fP_{X}^h(V)$ is a natural moduli space linking the moduli of stable pairs $\M_X^h(V)$ and the Quot-scheme $\Quot_X^h(V)$. When $X$ is a Calabi-Yau $3$-fold and $\dim V=1$, we hope this provides a further understanding of the DT/PT correspondence \cite{Br}\cite{T10}. It is also one of the motivations of the current work. The ideas and methods of this paper can be generalized to other moduli problems as well. 

This paper is organized as follows. In Section 1, we give two equivalent definitions of complete stable pairs. Then we use elementary modifications and glueing of sheaves to construct expanded stable pairs.  In Section 2, we define families of expanded stable pairs, and study their moduli stack. The main result is proved in this section. In Section 3, we work out the moduli of complete stable pairs on $\bP^1$ in details.  Finally, we establish a correspondence between complete stable pairs and complete quotients on $\bP^1$ in Section 4.
 
The author thanks professor Yi Hu for discussions on complete quotients \cite{Hu}, which inspired the current work.

In this paper we work over the field of complex numbers $\bk=\bC$.


\section{Complete stable pairs}

\subsection{Complete stable pairs} 

Let $X$ be a smooth projective variety over $\bk$. Fix a finite dimensional $\bk$-vector space $V$. Recall that \cite{PT} a stable pair $(\sF,\psi)$ on $X$ consists of a coherent sheaf $\sF$ on $X$ which is pure of dimension one and a homomorphism $\psi: V\otimes \sO_{X} \to \sF$ with finite cokernel.   

\begin{defn}\label{def_csp1}
Let $(\sF, \psi)$ be a stable pair on $X$. A complete stable pair over $(\sF, \psi)$ with length $m$ consists of a sequence of nonzero homomorphisms
\begin{align*}
&\psi_0:  V\otimes\sO_X\to \sF, \\
&\psi_1:  \ker \psi_0  \to\coker\psi_0, \\
&\hskip 50pt \vdots  \\
&\psi_m:  \ker \psi_{m-1}  \to\coker\psi_{m-1},
\end{align*}
such that $\psi_0=\psi$, and $\psi_m$ is the first surjection among $\psi_0, \cdots,\psi_m$, that is, $\coker \psi_i=0$ if and only if $i=m$. 
\end{defn}

We denote a complete stable pair by $(\sF,\psi_0,\cdots,\psi_m)$. It follows by definition that lengths of complete stable pairs over $(\sF, \psi)$ are bounded by the length of $\coker\psi$.

Let $(\sF, \psi_0,\cdots,\psi_m)$ and $(\sF',\psi'_0,\cdots,\psi'_{m'})$ be two complete stable pairs. Suppose $(\sF, \psi_0)$ and $(\sF', \psi'_0)$ are equivalent as stable pairs, i.e., there is an isomorphism $u_0:\sF\cong \sF'$ so that $\psi'_0=u_0\psi_0$, then $\ker\psi_0=\ker\psi'_0$ and it induces an isomorphism $u_1: \coker\psi_0\cong\coker\psi'_0$. Assume further that the following diagram commutes for some nonzero scalar $\lambda_1\in \bk$
\[ \xymatrix{
 \ker\psi_{0}\ar[r]^{\psi_1} \ar@{=}[d] & \coker\psi_0  \ar [d]^{\lambda_1u_1}  \\
 \ker\psi'_{0} \ar[r]^{\psi_1'} & \coker\psi'_0
}\]
then we have $\ker\psi_1=\ker\psi'_1$ and there is an induced isomorphism $u_2: \coker\psi_1\cong\coker\psi'_1$. Suppose this process continues, that is, assume $m=m'$ and there are isomorphisms 
\[ u_i: \coker\psi_{i-1}\cong\coker\psi'_{i-1}, \quad 1\le i\le m,
\]
and nonzero scalars $\lambda_i\in\bk$ such that $\psi'_i=\lambda_iu_i\psi_i$, 
and $u_{i+1}$ is induced from $u_{i}$ as above, then we say $(\sF, \psi_0,\cdots,\psi_m)$ and $(\sF',\psi'_0,\cdots,\psi'_{m'})$ are equivalent. Clearly, for nonzero scalars $c_0,\cdots,c_m$, $(\sF, \psi_0,\cdots,\psi_m)$ and $(\sF, c_0\psi_0,\cdots,c_m\psi_m)$ are equivalent. 

\medskip
 
There is another equivalent definition of complete stable pairs as follows. 

\begin{defn}\label{def_csp2}
A complete stable pair over $(\sF,\psi)$ with length $m$ consists of a sequence of coherent sheaves $\sF_0, \sF_1, \cdots, \sF_m$ on $X$ and homomorphisms 
\[ \varphi_i: V\otimes\sO_X\to \sF_i, \quad i=0,\cdots,m,
\] 
such that $(\sF_0,\varphi_0)=(\sF,\psi)$, and for each $i\ge 0$, $\sF_{i+1}$ fits into a short exact sequence
\begin{equation}\label{Fi}
0\to \coker\varphi_{i}\stackrel{\alpha_i}\lra\sF_{i+1}\stackrel{\beta_i}\lra \image\varphi_{i}\to 0
\end{equation}
satisfying $\beta_i\varphi_{i+1}=\varphi_{i}$, and the lengths of cokernels satisfy 
\[ \length(\coker\varphi_0)>\length(\coker\varphi_1)>\cdots>\length(\coker\varphi_m)=0.
\] 
\end{defn}

We denote such a complete stable pair by $(\sF_i,\varphi_i)_m$. From $\beta_i\varphi_{i+1}=\varphi_{i}$ in the definition, we see that $\beta_i$ induces a surjective homomorphisms
\begin{equation}\label{phi}
\image\varphi_{i+1}\to\image\varphi_i, \quad \coker\varphi_{i}\to \coker\varphi_{i+1}.
\end{equation} 
Since there are natural surjections $\sF_{i}\to\coker\varphi_{i}$ and injections $\image\varphi_{i}\to\sF_{i}$, the sequence \eqref{Fi} induces homomorphisms
\[ \sF_{i}\stackrel{\widetilde\alpha_i}\lra\sF_{i+1}\stackrel{\widetilde\beta_i}\lra\sF_{i}.
\] 

Two complete stable pairs $(\sF_i,\varphi_i)_m$ and $(\sF'_i,\varphi'_i)_{m'}$ are called equivalent if $m=m'$ and there are isomorphisms $\theta_i:\sF_i\to\sF_i'$ satisfying $\varphi_i'=\theta_i \varphi_i$ and 
\[ \theta_{i+1} \widetilde\alpha_i=\lambda_i\widetilde\alpha'_{i} \theta_{i},\quad \theta_{i} \widetilde\beta_i=\lambda'_i\widetilde\beta'_{i} \theta_{i+1}
\]
for some nonzero scalars $\lambda_i,\lambda'_i\in\bk$ and for all $i\ge 0$. Clearly, for nonzero scalars $c_0,\cdots,c_m$, $(\sF_i,\varphi_i)_m$ and $(\sF_i,c_i\varphi_i)_m$ are equivalent. 

Now we show the equivalence of Definition \ref{def_csp1} and \ref{def_csp2}. 

Let $(\sF, \psi_0,\cdots,\psi_m)$ be a complete stable pair on $X$. We define $(\sF_i,\varphi_i)_m$ inductively as follows. Let $\sF_0=\sF$ and $\varphi_0=\psi_0$. Suppose we have defined $\sF_i$ and $\varphi_i$ with $\ker\varphi_i=\ker\psi_i$ and an isomorphism $\coker\varphi_i\cong\coker\psi_i$, note that $\ker\psi_{i}$ is a subsheaf of $V\otimes \sO_X$, we define $\sF_{i+1}$ and $\varphi_{i+1}$ using the following pushout diagram 
\[ \xymatrix{
 \ker\psi_{i}\ar[r] \ar[d]^{\psi_{i+1}} & V\otimes \sO_X  \ar@{-->}[d]^{\varphi_{i+1}}  \\
 \coker\psi_{i} \ar@{-->}[r] & \sF_{i+1}  
}\] 
By $\ker\psi_{i}=\ker\varphi_{i}$, the above diagram can be extended as  
\begin{equation}\label{CD_Fi}
\xymatrix{
0 \ar[r]  &\ker\psi_{i}\ar[r] \ar[d]^{\psi_{i+1}} & V\otimes \sO_X \ar[r]^-{\varphi_{i}} \ar[d]^{\varphi_{i+1}} &\image\varphi_{i} \ar[r] \ar@{=}[d] & 0 \\
0\ar[r] &\coker\psi_{i} \ar[r] & \sF_{i+1} \ar[r]^{\beta_i} &\image\varphi_{i} \ar[r] & 0 
}\end{equation}
From $\coker\psi_{i}\cong\coker\varphi_{i}$, we see that $\sF_{i+1}$ satisfies the sequence \eqref{Fi} and $\beta_i\varphi_{i+1}=\varphi_{i}$. Moreover, by the snake lemma,  
 $\ker\psi_{i+1}=\ker\varphi_{i+1}$ and there is an induced isomorphism $\coker\psi_{i+1}\cong\coker\varphi_{i+1}$.  Since $\psi_i$ are nonzero, the lengths of $\coker\varphi_i$ decrease strictly.. 

Conversely, given a complete stable pairs of the form $(\sF_i,\varphi_i)_m$, let $\sF=\sF_0$ and $\psi_0=\varphi_0$. For $i\ge 0$, the condition $\beta_i\varphi_{i+1}=\varphi_{i}$ implies that we have the right square of commutative diagram \eqref{CD_Fi}. Taking kernels of $\varphi_i$ and $\beta_i$, and by the sequence \eqref{Fi}, we obtain homomorphisms $\psi_{i+1}: \ker\varphi_i\to\coker\varphi_i$. Finally, by snake lemma and induction on $i$, we have $\ker\psi_i=\ker\varphi_i$ and $\coker\psi_{i}\cong\coker\varphi_{i}$. In particular, $\coker\psi_{i}=0$ if and only if $\coker\varphi_{i}=0$. Therefore,  $(\sF, \psi_0,\cdots,\psi_m)$ is a complete stable pair. 

It is straightforward to verify that the correspondence preserves equivalence relations.

\subsection{Expanded stable pairs} 

We begin with some notations. Let $\gm=\spec \bk [t,t^{-1}]$. Define a linear $\gm$-action $\sigma$ on $\bP^1$ by
\[    \gm\times \bP^1\to \bP^1,\quad  (z_0, z_1)\mapsto (z_0, t^{-1}z_1). 
\]
Let $\bzero=(1,0)$ and ${\infty}=(0,1)$ be two fixed points on $\bP^1$. Let $\xi=(1,1)\in \bP^1$.  

Let $Y=X\times \bP^1$ with projections
\[ p:Y\to X,\quad  q: Y\to\bP^1.
\] 
Then the $\gm$-action $\sigma$ on $\bP^1$ induces a $\gm$-action on $Y$ by acting trivially on $X$. Let
\[ \iota^-:\,D_-=X\times \bzero\to Y, \quad \iota^+:\, D_+=X\times \infty\to Y
\] 
be the inclusions. We identify $D_-$ and $D_+$ with $X$ via the projection $p$. Let $\sO_Y(D_-)$ and $\sO_Y(D_+)$  be $\gm$-equivariant line bundles together with $\gm$-equivariant homomorphisms
\[ s^-:\,\sO_Y\to\sO_Y(D_-),\quad s^+:\,\sO_Y\to\sO_Y(D_+).
\]
The zero schemes of $s^-$ and $s^+$ are $D_-$ and $D_+$ respectively. Let 
\[ \sN_{-}=\coker s^-=\sO_{D_-}(D_-)\quad \text{and}\quad  \sN_{+}=\coker s^+=\sO_{D_+}(D_+)
\] 
be normal line bundles of $D_-$ and $D_+$ in $Y$ respectively. Then they are trivial bundles, and $\gm$ acts on the fibers of $\sN_{-}$ and $\sN_{+}$ with weight $-1$ and $1$ respectively. 

\medskip
  
Let $(\sF_i,\varphi_i)_m$ be a complete stable pair on $X$. We construct a $\gm$-equivariant sheaf $\widetilde \sF_i$ on $Y=X\times \bP^1$ together with a $\gm$-equivariant homomorphism $\widetilde\varphi_i: V\otimes \sO_{Y}\to \widetilde \sF_i$ for each $i$. For convenience, we denote 
\[ \sR_i=\image\varphi_i,\,\,\sT_i=\coker\varphi_i,  \,\,0\le i\le m, \quad\text{and}\quad \sR_{-1}=\sF_0,\,\,\sT_{-1}=0. 
\]
By Definition \ref{def_csp2}, for $0\le i\le m$, $\sF_i$ fits into exact sequences
\begin{align}  \label{Fi_1}
& 0\to\sR_i \to\sF_i \to\sT_i\to 0, \\
& 0\to \sT_{i-1}\to\sF_i\to\sR_{i-1}\to 0. \label{Fi_2} 
\end{align}
Consider the pullback of $\varphi_i:V\otimes \sO_X\to\sF_i$ to $Y$
\[ p^*\varphi_i:\, V\otimes \sO_Y\to p^*\sF_i.
\] 
Recall that  $s^-:\sO_{Y}\to\sO_{Y}(D_{-})$ is a $\gm$-equivariant homomorphism. Define 
\[ \widetilde\varphi_i=(p^*\varphi_i)\otimes s^-:\, V\otimes \sO_{Y}\to p^*\sF_i\otimes\sO_{Y}(D_{-}).
\]

We make elementary modifications of $p^*\sF_i\otimes\sO_{Y}(D_{-})$ as follows. First, note that the restriction of $p^*\sF_i\otimes\sO_{Y}(D_{-})$ to $D_+$ is naturally isomorphic to $\sF_i$. We let 
\[ \mu^+:\, p^*\sF_i\otimes\sO_{Y}(D_{-})\to \iota^+_* \sF_i  \to  \iota^+_* \sT_{i}
\]
be the composite of the restriction to $D_+$ and the homomorphism $\sF_i\to\sT_{i}$ in \eqref{Fi_1}. Then $\mu^+$ is a $\gm$-equivariant surjection on $Y$. Let 
\[ \sK_i=\ker\mu^+.
\] 
Next, consider the injection $\sK_i\to  p^*\sF_i\otimes\sO_{Y}(D_{-})$. Its restriction to $D_-$ is an isomorphism $\sK_i|_{D_-}\cong \sF_i\otimes \sN_-$. Composed with the homomorphism $\sF_i\to\sR_{i-1}$ in \eqref{Fi_2}, we obtain a $\gm$-equivariant surjection on $Y$ 
\[ \mu^-:\, \sK_i \to  \iota^-_*(\sF_i\otimes \sN_-) \to  \iota^-_*(\sR_{i-1}\otimes \sN_-).
\]  
Define 
\[ \widetilde \sF_i=\ker\mu^-. 
\]
Then we have $\gm$-equivariant subsheaves
$\widetilde \sF_i\subset \sK_i\subset p^*\sF_i\otimes\sO_{Y}(D_{-})$.  
 
\begin{lemm}\label{lemm_tFi}
The $\gm$-equivariant sheaf $\widetilde \sF_i$ is flat over $\bP^1$, and the image of $\widetilde\varphi_i$ lies in $\widetilde \sF_i$. Moreover, there are $\gm$-equivariant isomorphisms
\begin{align*}
&\theta_+: \widetilde \sF_i|_{D_+}\cong \sR_i\oplus (\sT_i\otimes\sN_+^\vee), \\
&\theta_-: \widetilde \sF_i|_{D_-}\cong \sR_{i-1}\oplus (\sT_{i-1}\otimes \sN_-). 
\end{align*}
The restrictions of $\widetilde\varphi_i: V\otimes \sO_{Y}\to \widetilde \sF_i$ to $D_+$ and $D_-$, when composed with $\theta_{\pm}$, are $(\varphi_{i},0)$ and $(\varphi_{i-1},0)$ respectively. 
\end{lemm}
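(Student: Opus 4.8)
The plan is to establish the four assertions of the lemma in turn, beginning with flatness, which is the most robust. Since $\sF_i$ is pulled back from $X$, the sheaf $p^*\sF_i$, and hence $p^*\sF_i\otimes\sO_Y(D_-)$, is flat over $\bP^1$: its stalks are free over the local rings of $\bP^1$. As $\bP^1$ is a smooth curve, flatness over $\bP^1$ is equivalent to the absence of $\sO_{\bP^1}$-torsion, a property inherited by subsheaves. Because $\widetilde\sF_i\subset\sK_i\subset p^*\sF_i\otimes\sO_Y(D_-)$, the sheaf $\widetilde\sF_i$ is torsion-free over $\bP^1$ and therefore flat. To see that $\image\widetilde\varphi_i\subset\widetilde\sF_i$ I would check $\mu^+\circ\widetilde\varphi_i=0$ and $\mu^-\circ\widetilde\varphi_i=0$. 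The map $\mu^+$ factors through restriction to $D_+$; since $s^-$ is nonvanishing there, $\widetilde\varphi_i|_{D_+}$ is identified with $\varphi_i$, whose image $\sR_i$ is exactly the kernel of $\sF_i\to\sT_i$, so $\mu^+\circ\widetilde\varphi_i=0$ and $\image\widetilde\varphi_i\subset\sK_i$. Likewise $\mu^-$ factors through restriction to $D_-$, where $s^-$ vanishes, so $\widetilde\varphi_i$ restricts to zero as a map into the ambient $p^*\sF_i\otimes\sO_Y(D_-)$; hence the composite into the sheaf $\iota^-_*(\sR_{i-1}\otimes\sN_-)$ supported on $D_-$ vanishes, giving $\image\widetilde\varphi_i\subset\ker\mu^-=\widetilde\sF_i$.

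For the restriction formulas I would exploit that the two modifications are supported on the disjoint divisors $D_+$ and $D_-$, so that $\widetilde\sF_i$ agrees with $\sK_i$ near $D_+$, while $\sK_i$ agrees with $p^*\sF_i\otimes\sO_Y(D_-)$ near $D_-$. I would then apply the standard fiber sequence for an elementary modification: if $\sK=\ker(\sG\to\iota_{D*}Q)$ with $\sG$ flat over the base, $\sN_D$ the normal bundle of $D$, and $K=\ker(\sG|_D\to Q)$, then $0\to Q\otimes\sN_D^\vee\to\sK|_D\to K\to 0$; this itself follows by tensoring $0\to\sK\to\sG\to\iota_{D*}Q\to 0$ with $\sO_D$ and using $\sTor_1^{\sO_Y}(\sK,\sO_D)=0$, a consequence of the flatness just established. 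Applied to the first modification at $D_+$ with $Q=\sT_i$, $K=\sR_i$ via \eqref{Fi_1}, this gives $0\to\sT_i\otimes\sN_+^\vee\to\widetilde\sF_i|_{D_+}\to\sR_i\to 0$; applied to the second modification at $D_-$ with $Q=\sR_{i-1}\otimes\sN_-$, $K=\sT_{i-1}\otimes\sN_-$ via \eqref{Fi_2}, and using $(\sR_{i-1}\otimes\sN_-)\otimes\sN_-^\vee\cong\sR_{i-1}$, it gives $0\to\sR_{i-1}\to\widetilde\sF_i|_{D_-}\to\sT_{i-1}\otimes\sN_-\to 0$. To split these I would invoke $\gm$-equivariance: since $\gm$ acts trivially on $X\cong D_\pm$, each restriction is a $\gm$-graded sheaf, and the sub- and quotient-sheaves above carry distinct $\gm$-weights (they differ by the nonzero weight of the relevant normal bundle). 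Hence each extension splits canonically into its weight pieces, producing the isomorphisms $\theta_\pm$.

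The delicate point, which I expect to be the main obstacle, is matching the restrictions of $\widetilde\varphi_i$ with $(\varphi_i,0)$ and $(\varphi_{i-1},0)$. On $D_+$ this is immediate: under $s^-|_{D_+}$ the map $\widetilde\varphi_i|_{D_+}$ equals $\varphi_i$, which lands in the weight-zero summand $\sR_i$. On $D_-$ a naive restriction is misleading, precisely because $s^-$ vanishes on $D_-$ and so $\widetilde\varphi_i$ restricts to zero as a map into the ambient sheaf; one must instead compute its restriction \emph{inside} the modified sheaf $\widetilde\sF_i$. I would carry this out by a local computation near $D_-$: writing $u$ for the local equation of $D_-$ and trivializing $\sO_Y(D_-)$, the sheaf $\widetilde\sF_i$ consists of the sections $m(u)$ with $m(0)\in\sT_{i-1}$, and its fiber splits, along the weight grading by powers of $u$, into the degree-zero piece $\sT_{i-1}\otimes\sN_-$ and the degree-one piece $\cong\sR_{i-1}$. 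The section $\widetilde\varphi_i(v)=\varphi_i(v)\,u$ lies in the degree-one piece, where its class is the image of $\varphi_i(v)$ in $\sR_{i-1}=\sF_i/\sT_{i-1}$; by the relation $\beta_{i-1}\varphi_i=\varphi_{i-1}$ from Definition \ref{def_csp2} this image is $\varphi_{i-1}(v)$. Thus $\widetilde\varphi_i|_{D_-}=(\varphi_{i-1},0)$ in the decomposition $\sR_{i-1}\oplus(\sT_{i-1}\otimes\sN_-)$, completing the proof. The only sustained care required is the bookkeeping of $\gm$-weights and of the normal-bundle twists, which I would record once near $D_\pm$ and reuse throughout.
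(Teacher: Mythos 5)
Your proof is correct, but it takes a genuinely different route from the paper's, and in one place it is more complete. For flatness, you use that flatness over the smooth curve $\bP^1$ is torsion-freeness over $\sO_{\bP^1}$, which passes to subsheaves of $p^*\sF_i\otimes\sO_Y(D_-)$; the paper instead exhibits $\sK_i$ and $\widetilde\sF_i$ as extensions of sheaves already known to be flat, via the explicit diagrams \eqref{CD_Ki} and \eqref{CD_Kip}. For the decompositions, you invoke a general elementary-modification fiber sequence $0\to Q\otimes\sN_D^\vee\to\sK|_D\to K\to 0$ and then split it canonically by $\gm$-weights (the sub and quotient differing by the weight of the normal bundle); the paper instead extracts explicit splittings from its diagrams, e.g.\ using $\tau|_{D_+}=0$ to produce the retraction $\sK_i|_{D_+}\to\sR_i$, and never appeals to a weight argument. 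Your weight-splitting is arguably cleaner and explains why the decompositions are canonical, which matters later when the $\widetilde\sF_i$ are glued along the $D_i$. For the containment $\image\widetilde\varphi_i\subset\widetilde\sF_i$ you check $\mu^+\circ\widetilde\varphi_i=0$ and $\mu^-\circ\widetilde\varphi_i=0$ directly, whereas the paper deduces $p^*\sR_i\subset\widetilde\sF_i$ from its diagrams; these are equivalent, and your version requires (and you correctly note) the identification $\sK_i|_{D_-}\cong(p^*\sF_i\otimes\sO_Y(D_-))|_{D_-}$, valid since the first modification is supported on $D_+$. Finally, your local computation at $D_-$, identifying the class of $\varphi_i(v)\,u$ in the degree-one piece with $\beta_{i-1}\varphi_i(v)=\varphi_{i-1}(v)$, supplies an argument for a step the paper dismisses as ``obvious''; this is the real content of the last assertion and you handle it correctly. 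One small repair: in justifying the fiber sequence, the vanishing you need from the long exact sequence of $\sTor$ is $\sTor_1^{\sO_Y}(\sG,\sO_D)=0$ (flatness of the \emph{ambient} sheaf $\sG$ near $D$), not $\sTor_1^{\sO_Y}(\sK,\sO_D)=0$; both vanish in your two applications ($\sG=p^*\sF_i\otimes\sO_Y(D_-)$ and $\sG=\sK_i$, each flat over $\bP^1$), so the slip is only in the citation, not in the argument.
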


\begin{proof}
By definition, $\sK_i$ fits into the following commutative diagram 
\begin{equation} \label{CD_Ki}
\xymatrix{
 0\ar[r] &  p^*\sR_{i} \otimes\sO_{Y}(D_{-})  \ar@{=}[r]\ar[d] &  p^*\sR_{i} \otimes\sO_{Y}(D_{-}) \ar[d] \\
 0\ar[r] &   \sK_i  \ar[r]  \ar[d] & p^*\sF_i \otimes\sO_{Y}(D_{-}) \ar[r]^-{\mu^+}   \ar[d] &    \iota^+_* \sT_{i}   \ar@{=} [d] \ar[r] &0 \\
 0\ar[r]  & p^*\sT_{i}\otimes\sO_{Y}(D_{-}) \otimes\sO_{Y}(-D_{+})  \ar[r]^-\tau   &  p^*\sT_{i}\otimes\sO_{Y}(D_{-}) \ar[r] &  \iota^+_* \sT_{i}  \ar[r] & 0 
}\end{equation}
where the middle column is indued from \eqref{Fi_1}. In the left column, the first and third terms are flat over $\bP^1$, so is $\sK_i$. 

Taking restriction of commutative diagram \eqref{CD_Ki} to $D_+$. Since $\tau|_{D_+}$ is a zero map, the restriction of $\sK_i \to p^*\sF_i\otimes\sO_{Y}(D_{-})$ to $D_+$ factors through a homomorphism 
\[ \sK_i|_{D_+}\to p^*\sR_{i} \otimes\sO_{Y}(D_{-})|_{D_+}=\sR_{i}.
\] 
It follows that the restriction of the left column to $D_+$ splits, which induces an isomorphism $\sK_i|_{D_+}\cong  \sR_{i}\oplus (\sT_{i}\otimes \sN_+^\vee)$. By the natural isomorphism $\sK_i|_{D_+}\cong \widetilde \sF_i|_{D_+}$ we get $\theta_+$. 

Next, note from \eqref{CD_Ki} that $p^*\sR_{i}\otimes\sO_{Y}(D_{-})  \subset\sK_i\subset p^*\sF_i\otimes\sO_{Y}(D_{-}) $. Composed with the homomorphism $p^*\sF_i\otimes\sO_{Y}(D_{-})\to p^*\sR_{i-1}\otimes\sO_{Y}(D_{-})$ induced from \eqref{Fi_2}, we obtain a homomorphism 
\[ \kappa: \sK_i\to p^*\sR_{i-1}\otimes\sO_{Y}(D_{-}).
\] 
By \eqref{phi}, $\sR_i\to \sR_{i-1}$ is surjective, and by \eqref{CD_Ki}, $p^*\sR_{i}\otimes\sO_{Y}(D_{-})$ is a subsheaf of $\sK_i$. It follows that  $\kappa$ is surjective.  Let $\sA_i$ be its kernel. We obtain  
\begin{equation} \label{Ki}
0\to\sA_i\to\sK_i\to p^*\sR_{i-1}\otimes\sO_{Y}(D_{-}) \to 0. 
\end{equation}
The restriction of \eqref{Ki} to $D_+$ is induced from \eqref{Fi_2}. By definition of $\widetilde \sF_i $, we have the following commutative diagram 
\begin{equation} \label{CD_Kip}
\xymatrix{
 0\ar[r] &  \sA_i  \ar[d]\ar@{=}[r] & \sA_i  \ar[d]  \\
 0\ar[r] &  \widetilde \sF_i  \ar[d]\ar[r]   & \sK_i  \ar[r]^{\mu^-}  \ar[d] &  \iota^-_*(\sR_{i-1}\otimes \sN_-) \ar@{=} [d] \ar[r] &0 \\
  0\ar[r]& p^*\sR_{i-1} \ar[r] & p^*\sR_{i-1} \otimes\sO_{Y}(D_{-})   \ar[r] &   \iota^-_*(\sR_{i-1}\otimes \sN_-)\ar[r] &0
}\end{equation}
In the middle column, since $\sK_i$ and $p^*\sR_{i-1} \otimes\sO_{Y}(D_{-})$ are flat over $\bP^1$, $\sA_i$ is also flat over $\bP^1$. It follows that in the left column, $\widetilde \sF_i$ is flat over $\bP^1$.  
 
In the same way, taking restriction of diagram \eqref{CD_Kip} to $D_-$ gives an isomorphism 
\[   \widetilde \sF_i|_{D_-}\cong \sA_i|_{D_-}\oplus \sR_{i-1}. 
\]
By the natural isomorphism $\sA_i|_{D_-}\cong\sT_{i-1}\otimes \sN_-$ we obtain $\theta_-$. 

Finally, by \eqref{CD_Ki}, we have $p^*\sR_i\otimes\sO_{Y}(D_-) \subset \sK_i$ and by \eqref{CD_Kip}, $\sK_i\otimes\sO_{Y}(-D_-)\subset \widetilde \sF_i $. It follows that  $p^*\sR_i\subset \widetilde \sF_i $. Therefore the image of $\widetilde\varphi_i$ lies in $\widetilde \sF_i $. The restrictions of $\widetilde\varphi_i$ to $D_+$ and $D_-$ are obvious. 
\end{proof}

\medskip
  
Let $(Y, D_+)$ be a pair. Recall that \cite{Li01}\cite{LW15} an expanded pair of $(Y, D_+)$ consists of a scheme $Y_{[m]}$ with simple normal crossings singularities
\[ Y_{[m]}= Y_0\cup Y_1\cup \cdots\cup Y_m 
\]
and a distinguished divisor $D_m\subset Y_{[m]}$, where each $Y_i$ is a copy of $Y$, and $\cup$ means that we glue $Y_i$ and $Y_{i+1}$ transversely along their divisors $D_+\subset Y_i$ and $D_-\subset Y_{i+1}$, the resulting identified Weil divisor is named $D_i$. Let $D_m$ be the divisor $D_+$ on $Y_m$. For convenience, we let $D_{-1}\subset Y_{[m]}$ be the divisor $D_-$ on $Y_0$.  Fix the $\gm$-action on $Y_{[m]}$ induced from the $\gm$-action on $Y$.   

Consider the pair $(\bP^1,\infty)$ and its expanded pair $(\bP^1_{[m]},\infty_m)$ where 
\[ \bP^1_{[m]}=\bP^1\cup \Delta_1\cup \cdots\cup \Delta_m
\]
with each $\Delta_i=\bP^1$ and $\infty_m$ is the point $\infty$ of $\Delta_m$. Then by $Y=X\times \bP^1$, we can identify $Y_{[m]}$ with $X\times \bP^1_{[m]}$ and the divisor $D_m$ with $X\times \infty_m$. 

\medskip

Given a complete stable pair $(\sF_i,\varphi_i)_m$ on $X$, we have constructed a $\gm$-equivariant homomorphism $\widetilde \varphi_i: V\otimes \sO_{Y_i}\to \widetilde \sF_i $ for each $i$. By Lemma \ref{lemm_tFi}, there is a $\gm$-equivariant isomorphism  
\[  \widetilde \sF_i |_{D_+}\cong \widetilde \sF_{i+1}|_{D_-}.
\]
Hence we can glue all these $\widetilde \sF_i$ along $D_i$ to obtain a $\gm$-equivariant sheaf $\widetilde \sF$ on $Y_{[m]}$. Moreover, since $\widetilde \varphi_{i}|_{D_+}$ and $ \widetilde \varphi_{i+1}|_{D_-}$ are identical as $\gm$-equivariant homomorphisms, $\widetilde \varphi_i$ can be glued along divisors $D_i$ to obtain a $\gm$-equivariant homomorphism 
\[  \widetilde\varphi: V\otimes \sO_{Y_{[m]}}\to \widetilde \sF.
\]
 
\begin{defn}
We say the $\gm$-equivariant sheaf $\widetilde \sF$ and $\gm$-equivariant homomorphism $\widetilde\varphi: V\otimes \sO_{Y_{[m]}}\to \widetilde \sF$ on $(Y_{[m]}, D_m)$ form an expanded stable pair $(\widetilde \sF, \widetilde\varphi)$. 
\end{defn}
 
Two expanded stable pairs $(\widetilde \sF, \widetilde\varphi)$ and $(\widetilde \sF', \widetilde\varphi')$ are equivalent if there is an automorphism $u$ of $(Y_{[m]},D_m)$ over $(Y,D_+)$, and an isomorphism $\alpha: \widetilde \sF\to u^*\widetilde \sF'$ so that the following diagram commutes
\[ \xymatrix{
V\otimes \sO_{Y_{[m]}}\ar[r]^-{\widetilde\varphi}\ar@{=}[d] &\widetilde \sF\ar[d]^{\alpha} \\
V\otimes \sO_{Y_{[m]}}\ar[r]^-{u^*\widetilde\varphi'} & u^*\widetilde \sF'
}\]
Here we identified $u^*(V\otimes \sO_{Y_{[m]}})$ with $V\otimes \sO_{Y_{[m]}}$ by sending the constant section $1$ of $\sO_{Y_{[m]}}$ to itself. 

By definition, equivalent complete stable pairs induce equivalent expanded stable pairs. 

\begin{prop}
There is a one-to-one correspondence between the set of equivalence classes of complete stable pairs with length $m$ on $X$ and the set of equivalence classes of expanded stable pairs on $(Y_{[m]},D_m)$. 
\end{prop}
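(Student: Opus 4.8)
The plan is to construct mutually inverse maps between the two sets of equivalence classes. One direction $\Phi$ is already in hand: the gluing construction preceding the statement sends a complete stable pair $(\sF_i,\varphi_i)_m$ to an expanded stable pair $(\widetilde\sF,\widetilde\varphi)$, and the observation that equivalent complete stable pairs induce equivalent expanded stable pairs shows $\Phi$ is well defined on equivalence classes. All the content lies in producing an inverse $\Psi$ and in matching the two equivalence relations.

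To define $\Psi$ I would start from an expanded stable pair $(\widetilde\sF,\widetilde\varphi)$ on $(Y_{[m]},D_m)$ and \emph{de-glue} it. For each component $Y_i\cong Y=X\times\bP^1$ put $\widetilde\sF_i=\widetilde\sF|_{Y_i}$; flatness of $\widetilde\sF$ over $\bP^1_{[m]}$ makes each $\widetilde\sF_i$ flat over the corresponding $\bP^1$, and the normal-crossing gluing gives $\gm$-equivariant identifications $\widetilde\sF_i|_{D_+}\cong\widetilde\sF_{i+1}|_{D_-}$. Let $\sF_i$ be the fibre of $\widetilde\sF_i$ over the point $\xi$ and $\varphi_i=\widetilde\varphi_i|_{X\times\{\xi\}}$; since $\xi$ is not $\gm$-fixed, this is the common isomorphism type of every nonspecial fibre, and for $i=0$ it returns the stable pair $(\sF,\psi_0)$. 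The essential input is the structure theory of $\gm$-equivariant sheaves on $X\times\bP^1$ that are flat over $\bP^1$: such a sheaf is a Rees-type interpolation determined by its general fibre $\sF_i$ together with two opposite filtrations, whose associated graded objects are the fixed-point restrictions $\widetilde\sF_i|_{D_\pm}$. Because $\gm$ acts on the fibres of $\sN_-$ and $\sN_+$ with weights $-1$ and $1$, these restrictions split by weight as $\widetilde\sF_i|_{D_+}\cong\sR_i\oplus(\sT_i\otimes\sN_+^\vee)$ and $\widetilde\sF_i|_{D_-}\cong\sR_{i-1}\oplus(\sT_{i-1}\otimes\sN_-)$, with $\widetilde\varphi_i$ landing in the $\sR$-summands.

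Reading off the two filtrations recovers the exact sequences \eqref{Fi_1} and \eqref{Fi_2} on each $\sF_i$, hence the subsheaf $\sR_i=\image\varphi_i$, the quotient $\sT_i=\coker\varphi_i$, and the maps $\alpha_i,\beta_i$ satisfying $\beta_i\varphi_{i+1}=\varphi_i$ that come from the cross-bubble identifications; thus $(\sF_i,\varphi_i)_m$ is a complete stable pair in the sense of Definition \ref{def_csp2}. The stability condition along the distinguished divisor, namely surjectivity of $\widetilde\varphi$ on $D_m$, forces $\sT_m=0$, while the genuine presence of all $m$ bubbles forces $\sT_i\neq 0$ for $i<m$, giving the strict chain $\length(\coker\varphi_0)>\cdots>\length(\coker\varphi_m)=0$. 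That $\Phi$ and $\Psi$ are mutually inverse then follows from Lemma \ref{lemm_tFi}: the general fibre of the sheaf constructed there is exactly $\sF_i$, since the modifications $\mu^\pm$ are supported on $D_\pm$ and do not touch a nonspecial fibre, so $\Psi\circ\Phi=\id$; and the same structure theorem, used in reverse, shows that a $\gm$-equivariant sheaf flat over $\bP^1$ with the prescribed general fibre and prescribed gradeds at $0$ and $\infty$ must coincide with the elementary modification of Lemma \ref{lemm_tFi}, so $\Phi\circ\Psi=\id$.

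Finally I would match the equivalence relations, and this is where I expect the real difficulty. The automorphisms $u$ of $(Y_{[m]},D_m)$ over $(Y,D_+)$ form a torus $(\gm)^m$, one factor scaling each bubble $\Delta_i$ while fixing its two marked points, so an equivalence $(\alpha,u)$ of expanded stable pairs combines such a bubble-rescaling with a $\gm$-equivariant isomorphism $\alpha$ of the de-glued sheaves. The obstacle is to show that these data correspond \emph{exactly}, and not merely up to a coarser relation, to the isomorphisms $\theta_i\colon\sF_i\to\sF_i'$ and the nonzero scalars $\lambda_i,\lambda_i'$ of Definition \ref{def_csp2}. Concretely, the weight with which $\alpha$ acts on the weight-$(\pm 1)$ summand of $\widetilde\sF_i|_{D_i}$, combined with the scaling parameter of $u$ on $\Delta_i$, must be identified with $\lambda_i$ and $\lambda_i'$; carrying out this bookkeeping of $\gm$-linearizations across every node, and confirming that rescaling a bubble is precisely the operation $\varphi_i\mapsto c_i\varphi_i$ up to equivalence, is the delicate point on which the bijection rests.
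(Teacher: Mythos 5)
Your two maps are the same as the paper's: $\Phi$ is the gluing construction, and your $\Psi$ — restriction of $(\widetilde\sF,\widetilde\varphi)$ to the slices over the general points $\xi_i\in\Delta_i$ — is exactly the paper's inverse, including the observation that the modifications $\mu^{\pm}$ are supported on $D_{\pm}$, so that $\Psi\circ\Phi=\id$. The genuine problem is your argument for $\Phi\circ\Psi=\id$. You assert that a $\gm$-equivariant sheaf on $X\times\bP^1$, flat over $\bP^1$, is determined by its general fibre together with the \emph{gradeds} (the restrictions to $D_{\pm}$). That is false: the Rees dictionary you appeal to determines the sheaf from the general fibre together with the two \emph{filtrations}, and different filtrations with the same associated graded give non-isomorphic sheaves. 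Already for $X$ a point and general fibre $\bk^2$ with two-step filtrations of weights $\{0,-1\}$ at both ends, the resulting bundle is $\sO_{\bP^1}\oplus\sO_{\bP^1}$ for one relative position of the two filtration lines and $\sO_{\bP^1}(1)\oplus\sO_{\bP^1}(-1)$ for the other — same gradeds, different sheaves. The repair is available in your own setup: $\Psi$ must record the filtrations themselves, and these are precisely the data of Definition \ref{def_csp2} — on the $D_+$ side the filtration is $\image\varphi_i\subset\sF_i$, on the $D_-$ side it is the subsheaf $\coker\varphi_{i-1}\subset\sF_i$ coming from the sequence \eqref{Fi}. Since $\Phi$ manufactures its sheaf exactly from these filtrations, the correctly stated dictionary then gives $\Phi\circ\Psi=\id$.

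In fact none of this machinery is needed, for a reason your proposal overlooks: in this paper an expanded stable pair is \emph{defined} as the output of the gluing construction applied to some complete stable pair. Hence every expanded stable pair equals $\Phi(x)$ for some $x$, and $\Phi\circ\Psi=\id$ follows formally once $\Psi\circ\Phi=\id$ is checked; your structure-theoretic argument is really aimed at the intrinsic characterization of expanded stable pairs, which the paper postpones to Proposition \ref{prop_cri}. Similarly, the equivalence bookkeeping that you flag as "the delicate point" and leave open is routine rather than delicate: the automorphisms of $(Y_{[m]},D_m)$ over $(Y,D_+)$ are indeed the torus $(\gm)^m$ rescaling the bubbles, and $\gm$-equivariance of $(\widetilde\sF,\widetilde\varphi)$ identifies its restrictions to any two slices in the same orbit; so a bubble rescaling changes $(\sF_i,\varphi_i)$ only by such an identification, i.e.\ by nonzero scalars $c_i$, which is an equivalence of complete stable pairs, and conversely the scalars $\lambda_i$ of Definition \ref{def_csp2} are absorbed by an equivariant isomorphism combined with a bubble rescaling. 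You identified the right structures, but deferring that short verification, together with the false "gradeds" claim, are the two gaps in the write-up as it stands.
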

\begin{proof}
For any complete stable pair $(\sF_i,\varphi_i)_m$ on $X$, we have associated an expanded stable pair $(\widetilde \sF, \widetilde\varphi)$ on $(Y_{[m]},D_m)$. To establish this one-to-one correspondence, it suffices to show that an expanded stable pair induces a complete stable pair which is the inverse of the previous construction. 

Let $(\widetilde \sF, \widetilde\varphi)$ be an expanded stable pair on $(Y_{[m]},D_m)$. 
Recall that $Y_{[m]}=X\times \bP^1_{[m]}$ with projection $q: Y_{[m]}\to \bP^1_{[m]}$.
Let $\xi_i=(1,1)\in \Delta_i$ which is distinct from $\bzero$ and $\infty$. Let 
\[ X_i= q^{-1}(\xi_i)
\]
Then $X_i$ is isomorphic to $X$ via the projection $p$. Define 
\[ \varphi_i: V\otimes \sO_{X_i}\to \widetilde \sF|_{X_i}
\]
be the restriction of $\varphi$ to $X_i$. Then $(\widetilde \sF|_{X_0},\varphi_0)$ is a stable pair on $X$, and $(\widetilde \sF|_{X_i}, \varphi_i)_m$ is a complete stable pair. Moreover, the expanded stable pair associated to $(\widetilde \sF|_{X_i}, \varphi_i)_m$ is equivalent to $(\widetilde \sF, \widetilde\varphi)$. 

Finally, it is easy to verify that the correspondence preserves equivalence relations. 
\end{proof}

\begin{prop}\label{prop_auto}
The group of automorphisms of an expanded stable pair is trivial. 
\end{prop}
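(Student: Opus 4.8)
The plan is to decompose an automorphism of an expanded stable pair into its action $u$ on the pair $(Y_{[m]},D_m)$ and the sheaf isomorphism $\alpha$, and to show separately that $u=\id$ and $\alpha=\id$. An automorphism of $(Y_{[m]},D_m)$ over $(Y,D_+)$ fixes the main component $Y_0$ and rescales each bubble $Y_i$ ($1\le i\le m$) along $\Delta_i$ by a scalar $t_i\in\gm$; thus $u$ is recorded by $(t_1,\dots,t_m)$, and the whole point is that $\widetilde\sF$ rigidifies this torus. The cleanest route is to transport the problem to Definition \ref{def_csp1} through the preceding correspondence, which I would first upgrade from a bijection of equivalence classes to an equivalence of groupoids; an automorphism of $(\widetilde\sF,\widetilde\varphi)$ then corresponds to a self-equivalence of the underlying complete stable pair $(\sF,\psi_0,\dots,\psi_m)$, recovered by restricting to the slices $X_i=q^{-1}(\xi_i)$.

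On the complete-stable-pair side the computation is transparent. A self-equivalence consists of an isomorphism $u_0\colon\sF\to\sF$ with $\psi_0=u_0\psi_0$, together with the induced $u_i$ and scalars $\lambda_i$ satisfying $\psi_i=\lambda_iu_i\psi_i$. Since $(\sF,\psi_0)$ is a stable pair it is rigid: any $v$ with $v\psi_0=\psi_0$ is the identity on $\image\psi_0$, so $v-\id$ factors through the finite sheaf $\coker\psi_0$, and $\Hom(\coker\psi_0,\sF)=0$ by purity of $\sF$; hence $u_0=\id$, and therefore the induced $u_1=\id$. The commuting square then reads $\psi_1=\lambda_1\psi_1$, and since $\psi_1\ne 0$ we get $\lambda_1=1$, whence $u_2=\id$; inductively every $u_i=\id$ and every $\lambda_i=1$. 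Thus the only self-equivalence is trivial, which is exactly the assertion.

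I would also record the direct geometric mechanism, since it is where the $\gm$-action does its work and it certifies the groupoid upgrade above. On $Y_0$ we have $u|_{Y_0}=\id$, so $\alpha|_{X_0}$ is an automorphism of the stable pair $(\sF,\psi)$, hence $\id$ by the rigidity just used; as $\widetilde\sF_0$ is flat over $\bP^1$ it has no subsheaf supported on a fibre, so $\alpha|_{Y_0}=\id$. Propagating along the chain, the node $D_{i-1}$ is $\gm$-fixed and is fixed by $u$, so the gluing identifies $\alpha|_{D_-}$ on $Y_i$ with $\alpha|_{D_+}$ on $Y_{i-1}$; inductively this is $\id$. Now I use that, by Lemma \ref{lemm_tFi}, $\widetilde\sF_i|_{D_-}\cong\sR_{i-1}\oplus(\sT_{i-1}\otimes\sN_-)$ with the second summand of nonzero $\gm$-weight, and that $\sT_{i-1}=\coker\varphi_{i-1}\ne 0$ because $i-1<m$. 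Although $\widetilde\varphi_i|_{D_-}=(\varphi_{i-1},0)$ vanishes into $\sT_{i-1}\otimes\sN_-$, its first-order jet along $D_-$ has a nonzero component there, and this is precisely the data built in by the elementary modification. The bubble-rescaling $u_i$ acts on the conormal of $D_-$ by $t_i$ and, through the equivariant structure, on $\sT_{i-1}\otimes\sN_-$ by its weight; matching $\alpha_i\widetilde\varphi_i=u_i^*\widetilde\varphi_i$ at first order, with $\alpha_i|_{D_-}=\id$ and weight bookkeeping discarding the cross terms, forces $t_i=1$. With $t_i=1$ the same first-order analysis gives $\alpha|_{Y_i}=\id$, completing the induction and yielding $u=\id$, $\alpha=\id$.

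The main obstacle is the elimination of the bubble-rescaling torus $\gm^m$: one must show that scaling any single $\Delta_i$ nontrivially is incompatible with $\widetilde\sF$. Concretely this rests on the nonvanishing of the first-order jet of $\widetilde\varphi_i$ into the weight-nonzero summand $\sT_{i-1}\otimes\sN_-$—equivalently, that the modification really bubbles off nonzero data—together with the $\gm$-weight bookkeeping that isolates this term. In the groupoid-reduction route the same content appears as the step identifying automorphisms of $(\widetilde\sF,\widetilde\varphi)$ with self-equivalences of $(\sF,\psi_0,\dots,\psi_m)$; once that identification is in place, the clean inductive computation of the second paragraph finishes the proof.
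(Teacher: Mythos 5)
Your strategy is in the same spirit as the paper's (very terse) proof: kill the torus $\gm^m$ of bubble rescalings by induction along the chain, using rigidity of the stable pair on $Y_0$ and the weight decomposition of Lemma \ref{lemm_tFi} at the nodes together with $\sT_{i-1}\neq 0$. (Your paragraph-2 reduction adds nothing beyond this: the ``groupoid upgrade'' of the correspondence is exactly the statement being proved, as you concede, so the proof stands or falls with your geometric argument.) Two steps of that argument fail as written. First, the inference ``$\alpha|_{X_0}=\id$ and $\widetilde\sF_0$ flat over $\bP^1$, hence $\alpha|_{Y_0}=\id$'' is invalid: a homomorphism can restrict to zero on a single fibre without vanishing. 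For instance $\sS=\sO_Y\oplus q^*\sO_{\bP^1}(-1)$ is flat over $\bP^1$, and $\id+\beta$, with $\beta:q^*\sO_{\bP^1}(-1)\to\sO_Y$ given by a section of $q^*\sO_{\bP^1}(1)$ vanishing at $\xi$, is an automorphism restricting to the identity over $\xi$. What actually works on $Y_0$ is the compatibility with $\widetilde\varphi_0$: the difference $\alpha|_{Y_0}-\id$ annihilates $\image\widetilde\varphi_0$, hence factors through $\coker\widetilde\varphi_0$, which has dimension at most one, while $\widetilde\sF_0$ is a subsheaf of $p^*\sF_0\otimes\sO_Y(D_-)$ and hence pure of dimension two (purity of $\sF_0=\sF$ enters here); so the difference is zero.

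Second, the crux --- forcing $t_i=1$ --- is asserted rather than proved, and the two devices you invoke do not work as stated. ``Weight bookkeeping'' cannot be applied to $\alpha_i$ at all, because the definition of equivalence does not require $\alpha$ to be $\gm$-equivariant, so $\alpha_i$ has no weight decomposition; and the ``first-order component in $\sT_{i-1}\otimes\sN_-$'' of $\widetilde\varphi_i$ is not well defined without extra care, since the extension $0\to\sT_{i-1}\to\sF_i\to\sR_{i-1}\to 0$ does not split. More seriously, the purity argument that rescues $Y_0$ does \emph{not} transfer to the bubbles: for $i\ge 1$ the sheaf $\widetilde\sF_i$ is not pure --- it contains the one-dimensional subsheaf $\sA_i$ of diagram \eqref{CD_Kip} --- and $\coker\widetilde\varphi_i$ is one-dimensional for $i<m$, so $\Hom(\coker\widetilde\varphi_i,\widetilde\sF_i)$ need not vanish; your closing claim ``with $t_i=1$ the same first-order analysis gives $\alpha|_{Y_i}=\id$'' is therefore precisely the unproved point. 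A correct completion of your induction step runs as follows: with $\tau_{t_i}$ the equivariant structure map, $\epsilon=\tau_{t_i}^{-1}\alpha_i-\id$ annihilates $\image\widetilde\varphi_i$, hence factors through $\coker\widetilde\varphi_i$; the node condition $\alpha_i|_{D_-}=\id$ forces the component of $\epsilon|_{D_-}$ on the summand $\sT_{i-1}\otimes\sN_-$ to be $(t_i^{\pm 1}-1)\cdot\id$; but a map factoring through $\coker\widetilde\varphi_i$ can act on $\sT_{i-1}\otimes\sN_-$ only through the part of the cokernel supported on $D_-$ (killed by flatness of $\widetilde\sF_i$ over $\bP^1$) or through its horizontal part, of length $\length(\sT_i)<\length(\sT_{i-1})$, so this component cannot be an isomorphism, whence $t_i=1$. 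Note that the strict drop of lengths --- the defining property of a complete stable pair --- is the precise content of your phrase ``the modification really bubbles off nonzero data,'' and it must be invoked explicitly; and even after $t_i=1$ one still has to control $\epsilon$ itself (in particular $\epsilon|_{D_+}$, to continue the induction), which requires a further argument such as the surjectivity of $\widetilde\varphi|_{D_m}$ that the paper's proof cites and your argument never uses.
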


\begin{proof}
Let $(\widetilde \sF, \widetilde\varphi)$ be an expanded stable pair on $(Y_{[m]},D_m)$. Since its restriction to $Y_0$ is a stable pair, it has trivial automorphism. The $\gm$-equivalence of $\widetilde\varphi$ and the surjectivity of $\widetilde\varphi|_{D_m}$ force the automorphism of $\widetilde\varphi$ being trivial.
\end{proof}

\subsection{Properties of expanded stable pairs}

Let $\sS$ be a $\gm$-equivariant sheaf on $Y$ which is flat over $\bP^1$. Suppose there are decompositions 
\begin{align}\label{wd-}
& \sS|_{D_-}=\sW_{-}^{0}\oplus \sW_{-}^{-1}, \\
& \sS|_{D_+}=\sW_{+}^{0}\oplus \sW_{+}^{-1} \label{wd+}
\end{align}
so that $\gm$ acts on $\sW_{\pm}^{i}$ with weight $i$. Let
$\sS|_{D_-}\to  \sW_{-}^{-1}$ and  $\sS|_{D_+} \to  \sW_{+}^{0}$
be the natural induced homomorphisms. Then we have a surjective homomorphism 
$\sS\to  \iota^-_*\sW_{-}^{-1}\oplus  \iota^+_*\sW_{+}^{0}$. Let 
\begin{equation}\label{em}
\sS^e=\ker(\sS\to  \iota^-_*\sW_{-}^{-1}\oplus  \iota^+_*\sW_{+}^{0}). 
\end{equation} 

\begin{defn}
Let $\sS$ be a $\gm$-equivariant sheaf on $Y$ which is flat over $\bP^1$. We say $\sS$ is admissible if it has decompositions \eqref{wd-} \eqref{wd+} and the elementary modification $\sS^e$ is $\gm$-equivariantly isomorphic to $p^*\sF\otimes \sO_Y(-D_+)$ for some sheaf $\sF$ on $X$. An admissible sheaf is trivial if it is isomorphic (not necessary $\gm$-equivariantly) to $p^*\sF$ for some sheaf $\sF$ on $X$. 
\end{defn}

\begin{lemm}\label{lemm_adm}
Let $\sS_1$ and $\sS_2$ be admissible sheaves on $Y$. Then $\sS_1\oplus \sS_2$ is admissible. Let $\varphi:\sS_1\to\sS_2$ be a $\gm$-equivariant homomorphism between admissible sheaves. Then $\varphi(\sS_1)$ is admissible.  
\end{lemm}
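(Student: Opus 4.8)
The plan is to dispose of the direct sum immediately and to concentrate on the image.

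For $\sS_1\oplus\sS_2$ the decompositions \eqref{wd-} and \eqref{wd+} are additive, and the elementary modification \eqref{em} is formed summand by summand, so that $(\sS_1\oplus\sS_2)^e=\sS_1^e\oplus\sS_2^e\cong p^*(\sF_1\oplus\sF_2)\otimes\sO_Y(-D_+)$; flatness over $\bP^1$ is obviously preserved. This settles the first assertion. For the image, write $\sI=\varphi(\sS_1)$, $\sK=\ker\varphi$ and $\sQ=\coker\varphi$, and check the three requirements in turn. \emph{Flatness:} $\sI\subset\sS_2$ is a subsheaf of a sheaf that is flat, hence torsion-free, over the smooth curve $\bP^1$, so $\sI$ is flat over $\bP^1$. \emph{Weights:} restriction to $D_\pm$ is right exact and $\sS_1\twoheadrightarrow\sI$, so $\sI|_{D_\pm}$ is a $\gm$-equivariant quotient of $\sS_1|_{D_\pm}$ and therefore carries only the weights $0$ and $-1$; this produces \eqref{wd-}, \eqref{wd+} for $\sI$, and with them the modification $\sI^e$.

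The substance is to identify $\sI^e$. Since $\varphi$ is $\gm$-equivariant it preserves weights along $D_\pm$, hence is compatible with the surjections defining \eqref{em} and induces $\varphi^e:\sS_1^e\to\sS_2^e$, which factors as $\sS_1^e\xrightarrow{a}\sI^e\xrightarrow{j^e}\sS_2^e$. I would establish two facts. First, $a$ is surjective: applying $-\otimes\sO_{D_\pm}$ to $\sK\to\sS_1\to\sI\to0$ gives the right-exact sequence $\sK|_{D_\pm}\to\sS_1|_{D_\pm}\to\sI|_{D_\pm}\to0$, so on each weight piece the kernel of $\sS_1|_{D_\pm}\to\sI|_{D_\pm}$ is the image of $\sK|_{D_\pm}$; a diagram chase on the two copies of \eqref{em} then lifts every local section of $\sI^e$ to $\sS_1^e$. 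This step is formal. Second, $j^e$ is injective, equivalently $\sI^e=\sI\cap\sS_2^e$: from $0\to\sI\to\sS_2\to\sQ\to0$ and $\sTor_1(\sS_2,\sO_{D_\pm})=0$ (flatness of $\sS_2$) one gets $\ker(\sI|_{D_\pm}\to\sS_2|_{D_\pm})\cong\sTor_1(\sQ,\sO_{D_\pm})$, and I must show this lands off the summand retained by \eqref{em}.

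This last point is where I would run the weight computation, using the graded (Rees) description of the sheaves near each divisor. Near $D_+$ the admissibility of $\sS_2$ forces its Rees module to vanish in positive weight with generic fibre $\sF_2$, so the $D_+$-torsion of $\sQ$ is concentrated in weight $0$; twisting by $\sO_Y(-D_+)|_{D_+}=\sN_+^\vee$, of weight $-1$, places $\sTor_1(\sQ,\sO_{D_+})$ in weight $-1$, so the surviving piece $\sW_+^{0}$ of $\sI|_{D_+}$ injects into that of $\sS_2$. The symmetric computation at $D_-$, where \eqref{em} discards weight $-1$ and $\sN_-^\vee$ has weight $+1$, puts $\sTor_1(\sQ,\sO_{D_-})$ in weight $0$ and makes $\sW_-^{-1}$ inject. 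Hence $j^e$ is injective and $\sI^e=\sI\cap\sS_2^e$, whence $\image(\varphi^e)=j^e(a(\sS_1^e))=\sI^e$. Finally $\varphi^e$ is a $\gm$-equivariant map $p^*\sF_1\otimes\sO_Y(-D_+)\to p^*\sF_2\otimes\sO_Y(-D_+)$; untwisting and using $\Hom_{\gm}(p^*\sF_1,p^*\sF_2)=\Hom_X(\sF_1,\sF_2)$ (as $p_*\sO_Y=\sO_X$ and only constants are $\gm$-invariant on $\bP^1$), it equals $p^*f\otimes\id$ for a unique $f:\sF_1\to\sF_2$. Thus $\sI^e=p^*(\image f)\otimes\sO_Y(-D_+)$, so $\sI$ is admissible with $\sG=\image f$.

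I expect the one genuinely non-formal step to be the injectivity of $j^e$, that is, the two weight computations at $D_+$ and $D_-$. Their content is that the confinement of weights to $\{0,-1\}$, \emph{together with the admissibility of $\sS_2$} (the sharp vanishing of its Rees module in the extreme weight at each divisor — this is exactly the rigidity that fails for a general flat $\gm$-sheaf), forces the obstruction $\sTor_1(\sQ,\sO_{D_\pm})$ into the weight that \eqref{em} throws away. The point to be careful about is pinning down the $\gm$-weights of $\sN_+$ and $\sN_-$, namely $+1$ and $-1$, so that the torsion provably lands off the surviving summand rather than on it.
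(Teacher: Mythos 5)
Your argument is correct in substance, but you should know that the paper's own proof of Lemma~\ref{lemm_adm} is the single sentence ``Follows from the definition,'' so there is no argument of the author's to compare yours against; what you have written supplies the details the paper treats as routine. Those details are sound: additivity of the decompositions \eqref{wd-}, \eqref{wd+} and of the modification \eqref{em} handles the direct sum; flatness of $\varphi(\sS_1)$ holds because a subsheaf of a $\bP^1$-flat (hence torsion-free over the curve) sheaf is flat; the weights of $\varphi(\sS_1)|_{D_\pm}$ are confined to $\{0,-1\}$ by right-exactness of restriction; your lifting chase proving surjectivity of $a$ works; and $\Hom_Y(p^*\sF_1,p^*\sF_2)\cong\Hom_X(\sF_1,\sF_2)$ (adjunction plus $p_*\sO_Y=\sO_X$) identifies $\varphi^e$ with $p^*f\otimes\id$, hence $\sI^e$ with $p^*(\image f)\otimes\sO_Y(-D_+)$, $\gm$-equivariantly.

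The one place your self-assessment goes wrong is the step you single out as ``the one genuinely non-formal step'': injectivity of $j^e$ is automatic, for the same weight-preservation reason that gives you the factorization. Indeed $\sI^e\subset\sI\subset\sS_2$ and $\sS_2^e\subset\sS_2$ are all subsheaves of $\sS_2$, and since the inclusion $\sI|_{D_\pm}\to\sS_2|_{D_\pm}$ is $\gm$-equivariant, a local section of $\sI^e$ (pure weight $0$ along $D_-$ and pure weight $-1$ along $D_+$, computed in $\sI|_{D_\pm}$) restricts with the same pure weights in $\sS_2|_{D_\pm}$; thus $\sI^e\subseteq\sS_2^e$ and $j^e$ is simply this inclusion. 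Once $a$ is surjective and $j^e$ is an inclusion, $\image\varphi^e=\sI^e$ follows; the stronger equality $\sI^e=\sI\cap\sS_2^e$ is never needed. This is fortunate, because your argument for that equality is the weakest part of the write-up: the claim that the $D_+$-torsion of $\coker\varphi$ is concentrated in weight $0$ does not follow from the reasons stated (torsion sections of $\coker\varphi$ need not inject into $(\coker\varphi)|_{D_+}$, so their weights are not controlled by the weights of that restriction), and the Rees-module assertion is left vague. You do not need to repair it: delete that paragraph, keep the inclusion observation above, and your proof is complete---which also explains why the paper could afford to dismiss the lemma as following from the definition.
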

\begin{proof}
Follows from the definition. 
\end{proof}

\begin{lemm}\label{lemm_cons}
Let $\sS$ be an admissible sheaf on $Y$ with decompositions \eqref{wd-} and \eqref{wd+}. Denote $X_1=X\times \xi\subset Y$. Then there are induced exact sequences of sheaves on $X$
\begin{align}\label{seq+}
& 0\to \sW^0_+\stackrel{\gamma}\lra \sS|_{X_1}\to\sW^{-1}_+ \to 0, \\
& 0\to \sW^{-1}_- \to \sS|_{X_1}\stackrel{\beta}\lra\sW^{0}_-\to 0.\label{seq-}
\end{align}
Suppose further $\varphi:V\otimes \sO_Y\to\sS$ is a $\gm$-equivariant homomorphism, and let $\varphi_1=\varphi|_{X_1}$, $\varphi^{\pm}=\varphi|_{D_{\pm}}$. Then $\gamma\varphi^+=\varphi_1$ and $\beta\varphi_1=\varphi^-$. 
\end{lemm}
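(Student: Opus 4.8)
The plan is to analyze the $\gm$-action on $\sS$ near the fiber $X_1 = X \times \xi$ and relate it to the weight decompositions at the two fixed divisors $D_\pm$. Since $\gm$ acts on $\bP^1$ by $(z_0,z_1)\mapsto (z_0,t^{-1}z_1)$, the orbit of the point $\xi=(1,1)$ sweeps out the open orbit $\bP^1\setminus\{\bzero,\infty\}$, with $\bzero$ as the limit under one direction of the flow and $\infty$ as the limit under the other. Concretely, I would fix a $\gm$-equivariant trivialization in a neighborhood and use the fact that $\sS$ is flat over $\bP^1$, so that restricting to the punctured orbit gives a family whose specializations at $\bzero$ and $\infty$ are governed by the weight filtration of the $\gm$-action.

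First I would establish the exact sequence \eqref{seq+}. The idea is that the weight-$0$ part $\sW^0_+$ of $\sS|_{D_+}$ is the part of the fiber at $\infty$ that ``survives'' as one flows from $\xi$ toward $\infty$, while the weight-$(-1)$ part $\sW^{-1}_+$ is the part that degenerates. More precisely, using the $\gm$-action I would produce the homomorphism $\gamma:\sW^0_+\to\sS|_{X_1}$ as a specialization map: identify $\sS|_{X_1}$ with a nearby fiber via the group action $t\cdot$, take the limit $t\to 0$ (or $t\to\infty$, according to the sign of the weight), and read off which summand embeds and which becomes the quotient. The admissibility hypothesis is what guarantees these limits behave as stated, because it pins down the elementary modification $\sS^e\cong p^*\sF\otimes\sO_Y(-D_+)$ and hence controls the local structure of $\sS$ along $D_+$ to be exactly the extension governed by the two weights. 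Symmetrically, I would obtain \eqref{seq-} by flowing toward $\bzero$ instead, which interchanges the roles of the weight-$0$ and weight-$(-1)$ summands, explaining why the inclusion in \eqref{seq-} is the weight-$(-1)$ piece $\sW^{-1}_-$ rather than the weight-$0$ piece.

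The second half, the compatibility $\gamma\varphi^+=\varphi_1$ and $\beta\varphi_1=\varphi^-$, I expect to follow formally once the maps $\gamma$ and $\beta$ are correctly identified. Since $\widetilde\varphi$ (here $\varphi$) is $\gm$-equivariant and $V\otimes\sO_Y$ carries the trivial action on $V$ with the natural action on $\sO_Y$, the section $\varphi$ commutes with the flow. Then $\varphi_1=\varphi|_{X_1}$ and $\varphi^\pm=\varphi|_{D_\pm}$ are related by the same specialization maps that define $\gamma$ and $\beta$: applying $\gamma$, which is the limit map toward $\infty$, to $\varphi^+$ must reproduce $\varphi_1$ because $\varphi$ is a single equivariant morphism and its restrictions are compatible under the action. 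So these identities are a direct consequence of the naturality of the construction, not a separate computation.

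The main obstacle will be making the specialization maps $\gamma$ and $\beta$ precise and canonical, and verifying that admissibility really forces the weight decomposition to split the fiber $\sS|_{X_1}$ into the claimed sub/quotient. In other words, the crux is to show that the two-step weight filtration of the $\gm$-action on the fiber at a fixed point propagates to an honest short exact sequence at the generic fiber $X_1$; this is essentially a statement that the $\gm$-equivariant structure, together with flatness over $\bP^1$ and the admissibility normalization, forces $\sS$ to look locally like the direct sum $\sO_Y\oplus\sO_Y(-D_+)$-type extension, so that restricting to $X_1$ recovers both \eqref{seq+} and \eqref{seq-}. I would handle this by reducing to the local model $\sS^e\cong p^*\sF\otimes\sO_Y(-D_+)$ guaranteed by admissibility and computing the exact sequences directly there, then transporting them back to $\sS$ via the elementary modification \eqref{em}.
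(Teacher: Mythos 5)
Your overall strategy converges, in its final paragraph, to the paper's actual proof: use admissibility to identify the elementary modification $\sS^e$ in \eqref{em} with $p^*\sF\otimes\sO_Y(-D_+)$, and extract everything by restricting the resulting exact sequence
\[
0\to p^*\sF\otimes\sO_Y(-D_+)\lra \sS\lra \iota^-_*\sW^{-1}_-\oplus\iota^+_*\sW^0_+\to 0
\]
to $X_1$, $D_+$ and $D_-$. But as written the proposal has two genuine gaps, located exactly at the points you yourself flag as obstacles. First, your primary mechanism --- ``specialization maps'' obtained by flowing $\xi$ toward $\infty$ or $\bzero$ and taking limits --- is not well defined in the direction you need: for a flat family, specialization goes from the generic fiber to the special fiber, so the $\gm$-flow by itself produces no map from the summand $\sW^0_+$ of the fiber over the fixed divisor \emph{into} the generic fiber $\sS|_{X_1}$. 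To make that picture rigorous one needs the Rees-module dictionary ($\gm$-equivariant sheaf flat over $\bA^1$ $=$ filtered sheaf on $X$, with special fiber the associated graded), under which the two-weight hypothesis gives a two-step filtration of $\sS|_{X_1}$ and hence the two sequences; you never set up this formalism, and ``taking limits'' is not a substitute for it.

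Second, in the fallback plan (``compute directly in the local model and transport back''), the crux of the computation is missing. The quotient term $\iota^-_*\sW^{-1}_-\oplus\iota^+_*\sW^0_+$ is supported on $D_\pm$, hence not flat over $\bP^1$, so restricting the displayed sequence to $D_+$ is \emph{not} exact; the paper's proof hinges on the resulting Tor term
\[
\sTor_1^{\sO_Y}\bigl(\sO_{D_+},\,\iota^+_*\sW^0_+\bigr)\cong \sW^0_+\otimes\sN_+^\vee,
\]
whose injection into $\sF\otimes\sN_+^\vee\cong\sS|_{X_1}\otimes\sN_+^\vee$ is precisely the map $\gamma$; without this computation neither $\gamma$ nor \eqref{seq+} has been constructed (and symmetrically for $\beta$ and \eqref{seq-}). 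Likewise, the identities $\gamma\varphi^+=\varphi_1$ and $\beta\varphi_1=\varphi^-$ are not purely formal: in the paper they come from a diagram chase comparing the sequence above with $0\to V\otimes\sO_Y(-D_+)\to V\otimes\sO_Y\to V\otimes\sO_{D_+}\to 0$, using that the first arrow vanishes upon restriction to $D_+$. Note also that the composite $\gamma\varphi^+$ only makes sense because $\gm$-equivariance forces $\varphi^+$ to land in the weight-zero summand $\sW^0_+$ --- a point your naturality argument should at least record.
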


\begin{proof}
By definition, we have an exact sequence 
\begin{equation}\label{Fe}
 0\to p^*\sF\otimes \sO_Y(-D_+)\to \sS\to  \iota^-_*\sW_{-}^{-1}\oplus  \iota^+_*\sW_{+}^{0}\to 0.
\end{equation}
Restricting to $X_1$, it gives an isomorphism $\sF\cong \sS|_{X_{1}}$. Taking restriction of \eqref{Fe} to $D_+$, and using the flatness of $\sS$ over $\bP^1$, we have  
\[  0\to \sTor_1\to \sF\otimes \sN_+^\vee\to \sS|_{D_+}\to   \sW_{+}^{0}\to 0,
\]
where $\sTor_1=\sTor_1^{\sO_Y}(\sO_{D_+},  \iota^-_*\sW_{-}^{-1}\oplus\iota^+_*\sW_{+}^{0})\cong \sW_{+}^{0}\otimes\sO_Y(-D_+)\cong \sW_{+}^{0}\otimes\sN_+^\vee$.  By the decomposition \eqref{wd+}, we obtain a short exact sequence 
\[  0\to \sW_{+}^{0}\otimes\sN_+^\vee \to \sF\otimes \sN_+^\vee \to  \sW_{+}^{-1}\to 0.
\]
Using the isomorphism $\sN_+^\vee\cong \sO_{D_+}$ we obtain \eqref{seq+}. Similarly, the restriction of \eqref{Fe} to $D_-$ yields \eqref{seq-}. 

Now suppose there is a $\varphi: V\otimes \sO_Y\to \sS$. We obtain a commutative diagram 
\[ \xymatrix{
0\ar[r] &V\otimes \sO_{Y}(-D_+) \ar[r]^-{\tau}\ar[d] & V\otimes \sO_{Y} \ar[d]^{\varphi} \ar[r] &  V\otimes \sO_{D_+} \ar[r]\ar[d] & 0 \\
0\ar[r] & p^*\sF\otimes \sO_Y(-D_+) \ar[r]  & \sS\ar[r] & \iota^-_*\sW_{-}^{-1}\oplus  \iota^+_*\sW_{+}^{0} \ar[r] & 0
}\]
The left colume induces a homomorphism $V\otimes \sO_Y\to p^*\sF$. Restricting to $X_1$, we get a homomorphism $V\otimes \sO_{X_1}\to \sS|_{X_1}$ that is $\varphi_1$. On the other hand, restricting the diagram to $D_+$, and note that $\tau|_{D_+}$ is zero, we obtain a commutative diagram 
\[ \xymatrix{
0\ar[r] &V\otimes \sN_+^\vee \ar[d]^{\varphi^+\otimes 1} \ar@{=}[r] & V\otimes \sN_+^\vee \ar[d]^{\varphi_1\otimes 1} \ar[r]^0 &  V\otimes \sO_{D_+} \ar[d] \\
0\ar[r] & \sW_{+}^{0}\otimes\sN_+^\vee \ar[r]^{\gamma\otimes 1}  & \sF\otimes \sN_+^\vee \ar[r] &  \sS|_{D_+} 
}\]
It follows that $\gamma\varphi^+=\varphi_1$. The second identity can be obtained by restriction to $D_-$. 
\end{proof}

\begin{prop}\label{prop_cri}
Let $\sS$ be a $\gm$-equivariant sheaf on $Y_{[m]}$ which is flat over $\bP^1_{[m]}$. Let $\varphi:V\otimes \sO_{Y_{[m]}}\to \sS$ be a $\gm$-equivariant homomorphism. Let $m\ge 1$. Suppose 
\[ \sS|_{D_i}=\sW_i^{0}\oplus\sW_i^{-1}
\]
are decompositions. Then $(\sS,\varphi)$ is an expanded stable pair on $(Y_{[m]},D_m)$ if and only if it satisfies the following conditions:
\begin{enumerate}
\item $\sS|_{Y_i}$ are admissible for all $i$, and they are not trivial;
\item $\image(\varphi|_{D_i})=\sW_i^0$ for all $i$, and $\varphi|_{D_m}$ is surjective.
\end{enumerate} 
\end{prop}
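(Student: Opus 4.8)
The plan is to prove the two implications separately, reducing in each case to an analysis on the individual components $Y_i$ and on the distinguished loci $D_i$ and $X_i=X\times\xi_i$. The dictionary between $(\sS,\varphi)$ on $Y_{[m]}$ and complete stable pairs on $X$ is supplied by Lemma \ref{lemm_tFi} going one way and Lemma \ref{lemm_cons} going the other: the former records the $\gm$-weight decompositions of $\widetilde\sF_i|_{D_\pm}$ and the values of $\widetilde\varphi_i$ there, while the latter converts the admissibility datum on a single $Y$ into the two exact sequences \eqref{seq+} and \eqref{seq-}, which are exactly \eqref{Fi_1} and \eqref{Fi_2}. Throughout I identify, on $Y_i$, the pair $(\sW^0_+,\sW^{-1}_+)$ with $(\sW_i^0,\sW_i^{-1})$ and $(\sW^0_-,\sW^{-1}_-)$ with $(\sW_{i-1}^0,\sW_{i-1}^{-1})$, according to the shared divisor $D_i=(D_+\subset Y_i)=(D_-\subset Y_{i+1})$.

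For the \emph{only if} direction, assume $(\sS,\varphi)$ is an expanded stable pair, so that $\sS|_{Y_i}\cong\widetilde\sF_i$ and $\varphi|_{Y_i}=\widetilde\varphi_i$ arise from a complete stable pair $(\sF_i,\varphi_i)_m$. Condition (2) is read off Lemma \ref{lemm_tFi}: under $\theta_+$ the restriction $\widetilde\varphi_i|_{D_+}$ equals $(\varphi_i,0)$, so its image is $\sR_i=\sW_i^0$; and at $D_m$ one has $\sT_m=\coker\varphi_m=0$, hence $\sW_m^{-1}=0$ and $\widetilde\varphi|_{D_m}$ is surjective. For condition (1) I would first check admissibility by computing the elementary modification $\sS^e$ of \eqref{em} directly from the two-step construction $\widetilde\sF_i\subset\sK_i\subset p^*\sF_i\otimes\sO_Y(D_-)$: deleting the pieces $\iota^+_*\sW_i^0$ and $\iota^-_*\sW_{i-1}^{-1}$ reverses the surjections $\mu^+$ and $\mu^-$ (up to the twists by $\sN_\pm$, which are trivial as line bundles) and returns $p^*\sF_i\otimes\sO_Y(-D_+)$. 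Non-triviality then follows from the criterion discussed below, using that every $\psi_i$ is nonzero in a complete stable pair; note the hypothesis $m\ge1$ is exactly what forces $\coker\varphi_0\neq0$, which is needed to see that $\widetilde\sF_0$ is non-trivial.

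For the \emph{if} direction I set $\sF_i=\sS|_{X_i}$ and $\varphi_i=\varphi|_{X_i}$. Since $\varphi$ is $\gm$-equivariant and $V\otimes\sO$ has weight $0$, the image of each $\varphi|_{D_i}$ automatically lies in the weight-$0$ summand $\sW_i^0$; condition (2) upgrades this to equality. Applying Lemma \ref{lemm_cons} to $\sS|_{Y_i}$ then yields \eqref{seq+} and \eqref{seq-}, which under the identifications $\image\varphi_i=\sW_i^0$ and $\coker\varphi_i=\sW_i^{-1}$ become \eqref{Fi_1} and \eqref{Fi_2}, together with $\gamma\varphi^+=\varphi_1$ and $\beta\varphi_1=\varphi^-$, i.e. $\beta_i\varphi_{i+1}=\varphi_i$. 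Thus $(\sF_i,\varphi_i)_m$ satisfies all of Definition \ref{def_csp2} except possibly the strict length inequalities. Surjectivity of $\varphi|_{D_m}$ combined with $\image\varphi|_{D_m}=\sW_m^0$ forces $\sW_m^{-1}=0$, that is $\coker\varphi_m=0$; and non-triviality of each $\sS|_{Y_i}$ gives, by the criterion below, that each $\psi_i\neq0$, hence the strict decrease of $\length(\coker\varphi_i)$. Therefore $(\sF_i,\varphi_i)_m$ is a complete stable pair. It remains to identify $(\sS,\varphi)$ with the expanded stable pair it induces: for this I would show that an admissible sheaf on $Y$ is determined $\gm$-equivariantly by its elementary modification $\sF$ and the decompositions \eqref{wd-}, \eqref{wd+}, so that $\sS|_{Y_i}\cong\widetilde\sF_i$ canonically, and then glue these isomorphisms along the $D_i$, absorbing the resulting discrepancies into the scalars $\lambda_i$ permitted by the equivalence relation.

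The hard part will be the triviality criterion that feeds both directions: an admissible $\sS|_{Y_i}\cong\widetilde\sF_i$ is trivial if and only if the corresponding step $\psi_i$ vanishes. The plan here is to compare the generic fibre $\sF_i$ with the special fibres $\sR_{i-1}\oplus\sT_{i-1}$ and $\sR_i\oplus\sT_i$ at $D_-$ and $D_+$: a non-equivariant isomorphism $\widetilde\sF_i\cong p^*\sF$ forces $\sF_i$ to split compatibly with \eqref{Fi_1} and \eqref{Fi_2}, equivalently to make the surjection $\coker\varphi_{i-1}\to\coker\varphi_i$ of \eqref{phi} an isomorphism, which is exactly $\psi_i=0$; the converse is the split case of the construction in Lemma \ref{lemm_tFi}. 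This equivalence, together with the rigidity statement needed to reconstruct and reglue the $\sS|_{Y_i}$, is where the $\gm$-equivariant structure and the extension \eqref{Fe} are genuinely used; by comparison, the admissibility computation reversing $\mu^+,\mu^-$ and the divisor bookkeeping of condition (2) are routine.
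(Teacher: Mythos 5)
Your proposal is correct and takes essentially the same route as the paper: the \emph{only if} part is read off the construction of Lemma \ref{lemm_tFi} (which the paper dismisses as obvious), and the \emph{if} part restricts to the slices $X_i$, applies Lemma \ref{lemm_cons}, uses condition (2) to identify $\sR_i=\sW_i^0$ and $\sT_i\cong\sW_i^{-1}$, and uses non-triviality of $\sS|_{Y_i}$ for the strict decrease of lengths, exactly as in the paper. The two points you single out as hard --- the triviality criterion (trivial iff $\psi_i=0$) and the final identification of $(\sS,\varphi)$ with the expanded stable pair associated to the extracted complete stable pair --- are precisely the steps the paper asserts without proof, and your sketches for them (comparison of the fibers over $D_\pm$ via their torsion parts, and filtration-type rigidity of $\gm$-equivariant admissible sheaves) are workable.
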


\begin{proof}
The only if part is obvious from the definition. For the if part, it suffices to show that $\sS$ and $\varphi$ induce a complete stable pair on $X$ whose associated expanded stable pair is equivalent to $(\sS,\varphi)$. Let $X_i=X\times \xi_i$ where $\xi_i=(1,1)$ on $\Delta_i$. For convenience, denote
\[     \sF_i=\sS|_{X_i} \quad\text{and}\quad    \varphi_i=\varphi|_{X_i}. 
\]
Since $\sS|_{Y_i}$ are admissible, by Lemma \ref{lemm_cons}, there are exact sequences
\begin{align*} 
& 0\to \sW^0_i\stackrel{\gamma_i}\lra \sF_{i}\to\sW^{-1}_i \to 0, \\
& 0\to \sW^{-1}_{i-1} \to \sF_{i}\stackrel{\beta_i}\lra\sW^0_{i-1}\to 0, 
\end{align*}
so that $\varphi_i= \gamma_i(\varphi|_{D_i})$ and $\varphi|_{D_{i-1}}=\beta_i\varphi_i$. 
Let $\sR_i=\image \varphi_i$ and $\sT_i=\coker \varphi_i$. Then $\sR_i=\sW_i^0$ and it follows that $\sT_i\cong \sW_i^{-1}$. Moreover, since $\sS|_{Y_i}$ are not trivial, $\sR_i\ncong \sR_{i-1}$ for all $i$,  
Therefore, $(\sF_i, \varphi_i)_m$ is a complete stable pair on $X$. Its associated expanded stable pair is then $(\sS,\varphi)$. 
\end{proof}


\section{Moduli stack of expanded stable pairs}

\subsection{Families of expanded stable pairs}

Let $(Y,D_+)$ be a pair. Recall that the standard models
\[ \xymatrix{
(Y[n], D[n])\ar[r]^-{p_n}\ar[d]^{\pi_n} & (Y, D_+)  \\
\bA^n 
}\]
were constructed inductively as follows: $(Y[0], D[0])=(Y,D_+)$ and $p_0$ is the identity map;
for $n\ge 0$, $(Y[n+1], D[n+1])$ is the blowing-up of $Y[n]\times\bA^1$ along $D[n]\times 0$, and $D[n+1]$ is the strict transform of $D[n]\times \bA^1$; and $\pi_{n+1}, p_{n+1}$ are obtained by natural compositions. It is easy to see that the fiber of $\pi_n$ over $0\in \bA^n$ is the expanded pair $(Y_{[n]},D_n)$, together with a morphism $p_n: (Y_{[n]},D_n)\to (Y,D)$. 

Let $S$ be a $\bk$-scheme. A family of expanded pairs of $(Y,D)$ over $S$ consists of morphisms $\pi$ and $p$ 
\[ \pi: (\cY,\cD)\to S,\quad p: (\cY,\cD)\to (Y,D)
\]
so that there is an \'etale covering $\{S_\alpha\}$ of $S$ and morphisms $\eta_\alpha:S_\alpha\to \bA^{n_\alpha}$ for some $n_\alpha$, so that 
\[ (\cY_\alpha,\cD_\alpha):=(\cY,\cD)\times_S S_\alpha\cong \eta_\alpha^*(Y[m_\alpha], D[m_\alpha])
\] 
over $S_\alpha$ and compatible with their projections to $(Y,D_+)$. Since the $\gm$-action on $Y$ can be lifted naturally to a $\gm$-action on $(Y[m_\alpha], D[m_\alpha])$, it induces a $\gm$-action $\sigma_\alpha$ on $(\cY_\alpha,\cD_\alpha)$
over $S_\alpha$ for each $\alpha$. These actions are compatible. So we obtain a $\gm$-action on $(\cY,\cD)\to S$ which acts trivially on $S$ and acts on the fibers in the same way as before. 

For any $\bk$-scheme $S$ and an expanded pair $(\cY,\cD)$ over $S$,   $V\otimes  \sO_{\cY}$ admits a $\gm$-action induced from the $\gm$-action on $\cY$ and the trivial action on $V$.

\begin{prop}\label{prop_open}
Let $(\cY,\cD)$ be an expanded pair over $S$. Let $\sF$ be an $S$-flat family of $\gm$-equivariant sheaf on $(\cY,\cD)$ and $\varphi: V\otimes \sO_{\cY}\to\sF$ be a $\gm$-equivariant homomorphism. Let $S^\circ\subseteq S$ be the subset consisting of those closed points $s\in S$ so that $\varphi_s: V\otimes \sO_{\cY_s}\to \sF_{s}$ is an expanded stable pair on $(\cY_s,\cD_s)$. Then $S^\circ$ is a Zariski open subset of $S$. 
\end{prop}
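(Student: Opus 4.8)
The plan is to reduce to an \'etale chart, invoke the fibral criterion of Proposition \ref{prop_cri}, and then check that each clause of that criterion cuts out an open locus; since there are only finitely many such clauses on a chart, the intersection is open and the result follows.

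\emph{Reduction and setup.} Whether a geometric fibre $\varphi_s$ is an expanded stable pair depends only on the restricted datum $(\cY_s,\cD_s,\sF_s,\varphi_s)$, which is preserved by the \'etale base change $S_\alpha\to S$ of the defining covering. Since \'etale morphisms are open and $\{S_\alpha\}$ covers $S$, the set $S^\circ$ is open if and only if each $S_\alpha^\circ$ is open. I may therefore assume there is a morphism $\eta:S\to\bA^n$ with $(\cY,\cD)\cong\eta^*(Y[n],D[n])$. On this chart the relative nodal locus of $\pi:\cY\to S$ meets each fibre in the divisors $D_i$, the number $m_s$ of components of $\cY_s$ equals the number of vanishing coordinates of $\eta(s)$ and is upper semicontinuous, and the $\gm$-action produces on every relative divisor a splitting of $\sF$ into its weight-$0$ and weight-$(-1)$ eigensheaves. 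Because $\gm$ is linearly reductive these eigensheaves commute with base change and specialise to the fibral decompositions $\sF_s|_{D_i}=\sW_i^0\oplus\sW_i^{-1}$.

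\emph{The standard open conditions.} By Proposition \ref{prop_cri}, $\varphi_s$ is an expanded stable pair precisely when, on every component of $\cY_s$, the sheaf $\sF_s$ is admissible and not trivial, $\image(\varphi_s|_{D_i})=\sW_i^0$ for all $i$, and $\varphi_s|_{D_m}$ is surjective. The image and surjectivity clauses are harmless: $\gm$-equivariance already forces $\image(\varphi_s|_{D_i})\subseteq\sW_i^0$, since the constant sections of $V\otimes\sO$ have weight $0$, so the only thing to impose is the vanishing of the fibral cokernel of $\varphi|_{D_i}\to\sW_i^0$ (and of $\varphi|_{D_m}$), which is open by Nakayama and the upper semicontinuity of the fibre dimension of a coherent cokernel. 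Admissibility includes flatness of $\sF_s$ over each $\bP^1$-factor, that is, flatness over $\bP^1_{[m_s]}$; applying the fibrewise flatness criterion to $\cY\to\mathcal{P}\to S$, where $\mathcal{P}\to S$ is the induced family of expanded $\bP^1$'s, shows that this flatness holds on an open subset of $S$.

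\emph{The crux: admissibility and nontriviality.} The substance of the proposition is the openness of the locus where $\sF_s$ is admissible and nontrivial on each component, and here two difficulties compound. First, admissibility asks that the elementary modification \eqref{em} of $\sF_s$ be a twisted pullback from $X$; I would perform the modification relatively, obtain an $S$-flat sheaf $\sF^e$ that is also $\mathcal{P}$-flat (again by the fibrewise flatness criterion), and then use that being a pullback from $X$ is detected by cohomology and base change, so that the locus where $\sF^e_s$ develops extra jumping in the $\bP^1$-direction is closed. Second, and more seriously, nontriviality on the $i$-th component is the strict drop $\length(\coker\varphi_{i,s})<\length(\coker\varphi_{i-1,s})$ of the fibral cokernel lengths on adjacent slices, a condition that is only meaningful once one knows which components are present; since $m_s$ jumps, it cannot be read off by naive semicontinuity on all of $S$ at once. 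The main obstacle is therefore the behaviour across strata: I must show that if $\varphi_{s_0}$ is an expanded stable pair on the deep fibre $\cY_{s_0}$, then for $s$ near $s_0$ the pair $\varphi_s$ remains an expanded stable pair on the possibly shorter chain $\cY_s$. Within the stratum $\{m_s=m_{s_0}\}$ the components are fixed, and the strict length drops are controlled by relative semicontinuity together with the conservation of Euler characteristic forced by $S$-flatness. As $s$ moves off the deep locus, adjacent components of $\cY_{s_0}$ merge, and one must verify that the merged sheaf is again admissible and nontrivial with the correct image; this is exactly where Lemmas \ref{lemm_adm} and \ref{lemm_cons} and the $\gm$-equivariant elementary-modification description keep the bookkeeping consistent, since the composite of two nontrivial steps is again a nontrivial step and the image condition is stable under composition. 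Intersecting the finitely many open loci produced by the three groups of conditions then shows that $S^\circ$ is open.
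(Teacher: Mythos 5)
Your proposal has the same skeleton as the paper's proof: reduce to a local chart, use the $S$-flat weight decomposition $\sF|_{\cD_i}=\oplus_k\sW_i^k$ (so that the vanishing of weights $\neq 0,-1$ spreads out from $s$), perform the elementary modification \eqref{em} relatively, invoke the criterion of Proposition \ref{prop_cri}, and observe that the image/surjectivity clauses are open (your remark that equivariance already forces $\image(\varphi_s|_{D_i})\subseteq\sW_i^0$ is correct and implicit in the paper). The genuine gap is at the step you yourself call the crux: propagation of admissibility. You assert that ``being a pullback from $X$ is detected by cohomology and base change, so that the locus where $\sF_s^e$ develops extra jumping in the $\bP^1$-direction is closed.'' This is not an argument that can be run: the base-change theorems for $p_*$ (with $p:Y\to X$ the projection) require the sheaf to be flat over $X$, whereas the sheaves here are supported on a surface inside $X\times\bP^1$ and are only fibrewise flat over the $\bP^1$-factor, so neither the formation of $p_*\sF_s^e$ nor the locus where the comparison map $p^*p_*\sF_s^e\to\sF_s^e$ is an isomorphism is controlled by cohomology and base change. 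Moreover, admissibility is not abstract pullback-ness: it demands a $\gm$-\emph{equivariant} isomorphism $\sF_s^e\cong p^*\sF\otimes\sO_Y(-D_+)$, so any correct proof must use the equivariant structure in an essential way. The paper's mechanism, which you have fully set up but abandon at this moment, is: since each $\sW_i^k$ is $S$-flat and $S$ may be taken reduced and irreducible, $\sW_i^k=0$ for $k\neq 0,-1$ at $s$ forces $\sW_i^k=0$ identically; hence for every nearby $t$ the restrictions of $\sF_{i,t}^e$ to the boundary divisors are concentrated in a single weight ($0$ on the $D_-$ side, $-1$ on the $D_+$ side), and a $\gm$-equivariant sheaf on $Y$, flat over $\bP^1$, with concentrated boundary weights is $\gm$-equivariantly a twisted pullback. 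That is the content of the paper's phrase ``from the weights of the $\gm$-action.''

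Your discussion of nontriviality across strata identifies the right issue (the paper deals with it through its simplifying assumption on the irreducible decomposition of $\cY$ plus induction), but your resolution --- ``a composite of two nontrivial steps is again a nontrivial step'' --- is only a heuristic until it is anchored in the same flatness: where condition (2) of Proposition \ref{prop_cri} holds, $\coker(\varphi_t|_{\cD_{i,t}})\cong\sW_i^{-1}|_t$, whose length is constant in $t$ by $S$-flatness of $\sW_i^{-1}$. It is this constancy that converts the strict drops $\length(\coker\varphi_s|_{D_{i-1}})>\length(\coker\varphi_s|_{D_i})$ on the deep fibre (where several components may sit between consecutive distinguished divisors) into strict drops on the nearby fibre after those components have merged, and hence into nontriviality of each merged admissible piece. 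So the architecture of your proof is the paper's, but the two decisive verifications must be rerouted through the flatness of the weight pieces and the equivariant ``concentrated weights implies twisted pullback'' statement rather than through cohomology and base change.
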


\begin{proof}
We assume $S$ is reduced and irreducible without loss of generality. Let 
\begin{equation}\label{decY}
\cY=\cY_0\cup\cY_1\cup\cdots\cup \cY_m
\end{equation} 
be the irreducible decomposition in which each $\cY_i$ is irreducible. Let $\cD_i=\cY_i\cap\cY_{i+1}$. Let $(\sF_s,\varphi_s)$ be an expanded stable pair on $(\cY_s,\cD_s)$. It suffices to show that there is a Zariski open subset $U\subset S$ containing $s$ so that for every point $t\in U$, $(\sF_t,\varphi_t)$ is an expanded stable pair on $(\cY_t,\cD_t)$. 

For simplicity, we assume that in the decomposition \eqref{decY}, 
\[ (\cY_{i}\times_S t,\cD_{i}\times_S t)\cong (Y,D_+)
\] 
for all $i$ and all $t\in S- \{s\}$. The general case can be treated in a similar way by induction. 

Note that $\sF|_{\cD_i}$ is flat over $S$, and it admits a $\gm$-action. There is a decomposition 
\[ \sF|_{\cD_i}=\mathop\oplus\limits_{k=-\infty}^{\infty} \sW_{i}^k 
\]  
so that $\gm$ acts on $\sW_{i}^k$ with weight $k$. It follows that $\sW_i^k$ are flat over $S$ as well. Since $(\sF_s,\varphi_s)$ is an expanded stable pair, the restriction $\sW_i^k|_{\cY_s}=0$ for $k\ne 0, -1$. By the $S$-flatness of $\sW_i^k$, we obtain $\sW_i^k=0$ for $k\ne 0, -1$. 

Let $\sF_i=\sF|_{\cY_i}$ and $\varphi_i=\varphi|_{\cY_i}$. Now we perform elementary modifications to $\sF_i$ as in \eqref{em}
\[ \sF_i^e=\ker(\sF_i\to  \iota^-_*\sW_{i-1}^{-1}\oplus  \iota^+_*\sW_{i}^{0}). 
\] 
By the $S$-flatness of $\sW_i^k$, $\sF_i^e$ is flat over $S$. 
Since $(\sF_s,\varphi_s)$ is an expanded stable pair, its restriction to all irreducible components of $\cY_i\times_S s$ are admissible. From the weights of the $\gm$-action, we see that the restriction $\sF_i|_{\cY_{i}\times_S t}$ are admissible. Finally, since surjectivity of a homomorphism is an open condition, we can find a  Zariski open subset $U\subset S$ containing $s$ so that condition (2) in Proposition 1.10  holds true for $\varphi_i|_{\cY_i\times_S t}:  V\otimes \sO_{\cY_i\times_S t}\to \sF_i|_{\cY_{i}\times_S t}$ when $t\in U$. In conclusion, $(\sF_t,\varphi_t)$ is an expanded stable pair on $(\cY_t,\cD_t)$ when $t\in U$. 
\end{proof}

Based on this proposition, we have
 
\begin{defn}
Let $\varphi: V\otimes \sO_{\cY}\to\sF$ be a $\gm$-equivariant homomorphism on $(\cY,\cD)\to S$. We say $(\sF, \varphi)$ is a family of expanded stable pairs if for every closed point $s\in S$, $(\sF_s, \varphi_s)$ is an expanded stable pair on $(\cY_s,\cD_s)$. 
\end{defn}

\subsection{Stack of expanded stable pairs}

We define the category $\fP_{X}^h(V)$ of expanded stable pairs as follows. An object $(\sF,\varphi; \cY, \cD, S)$ in $\fP_{X}^h(V)$ consists of an expanded pair $(\cY, \cD)$ over a $\bk$-scheme $S$ and an expanded stable pair $\varphi: V\otimes \sO_{\cY}\to\sF$ on $(\cY,\cD)$ such that for any closed point $s\in S$ the restriction $\sF_s|_{\cD_s}$ has Hilbert polynomial $h$. An arrow between two objects
\[ (\sF_1,\varphi_1; \cY_1, \cD_1, S_1)\to (\sF_2,\varphi_2; \cY_2,\cD_2, S_2)
\] 
consists of a Cartesian square 
\[ \xymatrix{
(\cY_1,\cD_1)\ar[r]^f \ar[d] &(\cY_2,\cD_2)\ar[d] \\
S_1\ar[r] & S_2
}\]
and an isomorphism of expanded stable pairs $f^*\varphi_2\cong \varphi_1$, that is, a commutative diagram 
\[ \xymatrix{
V\otimes \sO_{\cY_1}\ar[r]^-{f^*\varphi_2}\ar@{=}[d] & f^*\sF_2 \ar[d]_{\cong}^u \\
V\otimes \sO_{\cY_1}\ar[r]^-{\varphi_1} & \sF _1
}\]
It is straightforward to verify that $\fP_{X}^h(V)$ is a groupoid. 

\begin{prop}
The groupoid $\fP_{X}^h(V)$ is an algebraic space of finite type.
\end{prop}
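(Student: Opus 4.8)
The plan is to realize $\fP_X^h(V)$ as the quotient of a finite-type scheme by a free action of a smooth affine group, which is then automatically a finite-type algebraic space. The starting point is Proposition~\ref{prop_auto}: since every expanded stable pair has trivial automorphism group, $\fP_X^h(V)$ is a stack in setoids (all automorphism groups trivial and isomorphisms unique when they exist), hence equivalent to a sheaf of sets. Granting that it satisfies effective descent --- which follows from fppf descent for quasi-coherent sheaves together with the \'etale-local definition of families of expanded pairs, so that both objects and isomorphisms glue --- it remains to produce a finite-type scheme $R$ with a smooth surjection $R \to \fP_X^h(V)$ and to verify that the diagonal is representable.

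First I would establish boundedness. The restriction of an expanded stable pair to the component $Y_0$ is an ordinary stable pair $(\sF,\psi)$ with Hilbert polynomial $h$, and these form a bounded family since $\M_X^h(V)$ is of finite type. The chain length is bounded by $\length(\coker\psi)$, which is itself bounded in terms of $h$; hence the number $m$ of expansion components is uniformly bounded, say by $n_0$. Since each $\widetilde\sF_i$ is obtained from $p^*\sF_i$ by the two elementary modifications of Lemma~\ref{lemm_tFi}, the sheaves $\widetilde\sF$ on the expanded targets are again bounded. Consequently, with respect to a fixed relatively ample polarization, there is a uniform integer $k_0$ such that for every member of the family $\widetilde\sF(k_0)$ is globally generated with vanishing higher cohomology and fixed Euler characteristic $P:=h(k_0)$.

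Next I would build the atlas over the standard models. For each $0\le n\le n_0$, work over $p_n:Y[n]\to\bA^n$ with its fixed $\gm$-action. Using the uniform twist $k_0$, form the relative $\Quot$-scheme of quotients $\sO_{Y[n]}(-k_0)^{\oplus P}\twoheadrightarrow\sF$ that are flat over $\bA^n$ and fiberwise flat over the expanded $\bP^1_{[n]}$ with Hilbert polynomial $h$, and over it take the relative $\Hom$-scheme parameterizing $\varphi:V\otimes\sO\to\sF$. Imposing $\gm$-equivariance cuts out a closed subscheme, and by Proposition~\ref{prop_open} the locus where $(\sF,\varphi)$ is fiberwise an expanded stable pair --- equivalently, where the admissibility and image conditions of Proposition~\ref{prop_cri} hold --- is open. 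This yields a finite-type scheme $R_n$ carrying the change-of-frame action of $G:=\mathrm{GL}_P$ together with the expansion torus acting through the base $\bA^n$; set $R=\coprod_{n\le n_0}R_n$.

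Finally, the forgetful map $R\to\fP_X^h(V)$, remembering only the induced expanded stable pair, is smooth because forgetting the frame is a $G$-torsor while the expansion data varies in the smooth standard models $\pi_n$, and it is surjective because, by boundedness and the local structure of families of expanded pairs, every $S$-family is \'etale-locally the pullback of a framed family over some $Y[n]$ with $\pi_{n*}(\sF(k_0))$ free. Trivial automorphisms (Proposition~\ref{prop_auto}) force the combined $G$- and expansion-action to be free on this locus, so $\fP_X^h(V)=R/G$ is a finite-type algebraic space, and representability of the diagonal follows since $\mathrm{Isom}$ between two families is cut out by closed conditions inside a $\Hom$-scheme. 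The main obstacle I expect is precisely the relative $\Quot$/$\Hom$ construction over the \emph{varying} expanded target while tracking $\gm$-equivariance, and then proving that this atlas is smooth and surjective --- that is, the local-triviality statement that an arbitrary family of expanded stable pairs is \'etale-locally framed and pulled back from a standard model, together with the uniform bound $n_0$ on the number of expansion components.
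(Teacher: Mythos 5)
Your proposal is correct and follows essentially the same route as the paper: boundedness via restriction to the stable-pair divisor plus the bound on the number of expansion components, an atlas built from relative Quot/Hom data with a uniform twist over the standard models $Y[n]\to\bA^n$, openness of the expanded-stable-pair locus (Proposition~\ref{prop_open}), the frame group times the expansion torus $\gmn$ acting with the expanded-stable-pair locus preserved, and Proposition~\ref{prop_auto} to conclude the stack is an algebraic space. The deviations --- $\mathrm{GL}$ versus $\PGL$, a relative Hom-scheme over the Quot-scheme versus the paper's embedding of $R_n$ into $\Quot_{Y[n]/\bA^n}^{P}(W)\times\Hom(\pi_*(V\otimes\sO_{Y[n]}(m_0)),W\otimes\sO_{\bA^n})$, your explicit truncation at $n_0$, and your explicit treatment of $\gm$-equivariance as a closed condition (a point the paper leaves implicit) --- are matters of bookkeeping, not of substance.
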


\begin{proof}
It is standard to verify that  $\fP_{X}^h(V)$ is a stack. Now we show that $\fP_{X}^h(V)$ is bounded.  Let $A_{X,h}$ be the set of equivalence classes of expanded stable pairs 
\[ \varphi:V\otimes \sO_{Y_{[m]}}\to \sF
\] 
with Hilbert polynomial $h$. Then the restriction of $(\sF,\varphi)$ to the divisor $D_{-1}\subset Y_{[m]}$ 
\[ \varphi|_{D_{-1}}: V\otimes\sO_{D_{-1}}\to \sF|_{D_{-1}}
\] 
is contained in $\M_X^h(V)$. Note that $m$ is bounded by the length of $\coker(\varphi|_{D_{-1}})$. It follows that $A_{X,h}$ is bounded.  

Next we show that $\fP_{X}^h(V)$ is locally a stack quotient of a scheme and hence algebraic. Fix any integer $n$. Let $\sL$ be a relative ample line bundle on the standard model $\pi: Y[n]\to\bA^n$. For any sheaf $\sF$ on $Y[n]$ which is flat over $\bA^n$ and any integer $m$, let 
\[ \sF(m)=\sF\otimes \sL^{\otimes m}. 
\]
Now since $A_{X,h}$ is bounded, there is an integer $m_0$ so that for all $m\ge m_0$, 
\[   P(m):= \chi(\sF_s(m))=h^0(\sF_s(m))
\] 
for any $\sF_s\in A_{X,h}$. 

Fix such an $m_0$ and let $W$ be a $\bk$-vector space with dimension $P(m_0)$. Let $R_n$ be the set of homomorphisms $\varphi:V\otimes \sO_{Y[n]}\to \sF
$ where $\sF$ is a sheaf on $Y[n]$ that is flat over $\bA^n$ with Hilbert polynomial $P$. 
Then for $\varphi\in R_n$, we have a commutative diagram
\[ \xymatrix{
 \pi^*\pi_*(V\otimes \sO_{Y[n]}(m_0))\ar[d] \ar[r]^-{\varphi'}&   \pi^*\pi_*(\sF(m_0))\cong  W\otimes \sO_{Y[n]}  \ar[d]^\rho \\
 V\otimes \sO_{Y[n]}(m_0)\ar[r]^-{\varphi(m_0)}&  \sF(m_0) 
}\]

Let $\Quot_{Y[n]/\bA^n}^{P}(W)$ be the relative Quot-scheme parameterizing quotients
\[  \rho: W\otimes \sO_{Y[n]}  \to \sF(m_0)
\] 
over $\bA^n$ so that the Hilbert polynomial of $\sF_s$ is $P$ for any $s\in \bA^n$.  Then we get an injective map
\begin{equation}\label{str}
 R_n\to \Quot_{Y[n]/\bA^n}^{P}(W)\times \Hom(\pi_*(V\otimes \sO_{Y[n]}(m)), W\otimes \sO_{\bA^n}),\quad \varphi \mapsto (\rho,\varphi')
\end{equation}
whose image admits a natural subscheme structure which is still denoted by $R_n$. Moreover, the induced morphism $R_n\to \bA^n$ defines an expanded pair $(\cY,\cD)\to R_n$ and there is a tautological family of homomorphisms on $(\cY,\cD)$
\[ \widetilde\varphi: V\otimes \sO_{\cY}\to \widetilde \sF.
\]

There is a natural $\PGL(W)$-action on the right hand side of \eqref{str}. It also admits a $\gmn$-action induced from $Y[n]$. Let $M_n\subset R_n$ be the subset consisting of expanded stable pairs. By Proposition \ref{prop_open}, $M_n$ admits a natural open subscheme structure. Moreover, $M_n$ is preserved by the $\PGL(W)$-action and the $\gmn$-action. Therefore, the tautological family over $M_n$ induces a morphism
\[ f_n: [M_n/( \PGL(W)\times \gmn)]\to  \fP_{X}^h(V).
\]
Hence we have a morphism 
\[ \coprod_{n\ge 0} f_n: \coprod_{n\ge 0}[M_n/( \PGL(W)\times \gmn)]\to  \fP_{X}^h(V)
\]
that is \'etale and surjective by construction. Finally, Proposition \ref{prop_auto} shows that $\fP_{X}^h(V)$ is an algebraic space. This completes the proof. 
\end{proof}

Recall that on $(Y,D_+)$, there are two distinguished divisors, $D_+$ and $D_-$. Similarly, on each expanded pair $(\cY,\cD)$ over $S$, there are two distinguished divisors, $\cD_{-1}$ and $\cD$. Let $(\sF, \varphi)$ be a family of expanded stable pairs on $(\cY,\cD)$. Then the restriction to $\cD_{-1}$
\[ \varphi|_{\cD_{-1}}: V\otimes \sO_{\cD_{-1}}\to\sF|_{\cD_{-1}}
\]
is a family of stable pairs on $X$ parameterized by $S$, and the restriction to $\cD$
\[
 \varphi|_{\cD}: V\otimes \sO_{\cD}\to\sF|_{\cD}
\] 
is a family of quotients on $X$ parameterized by $S$. Therefore, we obtain natural morphisms
\[ \fP_{X}^h(V)\to \M_{X}^h(V),\quad \fP_{X}^h(V)\to \Quot_{X}^h(V).
\]

\subsection{Properness of the moduli stack} 
We prove the properness of $\fP_{X}^h(V)$ by valuative criterion. Let $\Delta$ be a nonsingular affine curve with a closed point $0\in\Delta$; let $\Delta^*=\Delta- \{0\}$. 

Let $(\cY^*,\cD^*)\to \Delta^*$ be a family of expanded pairs over $\Delta^*$, and let
$\varphi^\star: V\otimes \sO_{\cY^*}\to \sS^\star$ be a family of expanded stable pairs. We will show that possibly after base change, there exists a family of expanded pairs $(\cY, \cD)$ over $\Delta$ so that $(\cY,\cD)\times_\Delta \Delta^*=(\cY^*,\cD^*)$, together with a family of expanded stable pairs $ \varphi: V\otimes \sO_{\cY}\to \sS$ extending $\varphi^\star$.

By the construction of stack of expanded pairs, possibly after base change if necessary, we have $(\cY^*,\cD^*)=(Y_{[m]},D_m)\times \Delta^*$ for some $m$. For simplicity, we assume $m=0$, the general case $m>0$ is similar and we omit the details. Now we have $(\cY^*,\cD^*)=(Y,D_+)\times \Delta^*$. Let 
\[  (\cY^{0},\cD^{0})=(Y,D_+)\times \Delta. 
\]
Let $p:Y\times \Delta^*\to X\times  \Delta^*$ be the projection. By definition, $\varphi^\star$ is the pullback $p^*\psi^\star$ of a family of surjective stable pairs $\psi^\star:V\otimes \sO_{X\times \Delta^*}\to \sF^\star$ on $X\times \Delta^*$ . By the properness of the moduli functor of stable pairs on $X$, we can find a family of stable pairs $\psi:V\otimes \sO_{X\times \Delta}\to \sF$ extending $\psi^\star$. Let 
\[ \sS^0=p^*\sF,\quad \varphi^{0}=p^*\psi: V\otimes \sO_{\cY^0}\to\sS^0
\]
where $p: \cY^{0}\to X\times \Delta$ is the projection. We denote the restriction of $(\sF,\psi)$ to the central fiber $X\times 0$ by 
\[  (\sF_0, \psi_0)=(\sF|_{X\times 0}, \psi|_{X\times 0}).
\]

If $\psi_0$ is surjective, we let $(\cY,\cD)=(\cY^{0},\cD^{0})$ and $(\sS,\varphi)=(\sS^0,\varphi^0)$. It is a family of expanded stable pairs and we are done. Otherwise, the restriction of $\varphi^0$ to the central fiber $(\cY^{0},\cD^{0})\times_{\Delta}{0}$ is not an expanded stable pair. Now we let $\pi:\cY^{1}\to\cY^{0}$ be the blowing-up of $\cY^{0}$ along $D_+\times 0$, and let $\cD^1$ be the strict transform of $\cD^0$. Then we get a family of expanded pairs $(\cY^{1},\cD^{1})\to \Delta$  
with central fiber 
\[   (\cY^{1},\cD^{1})\times_\Delta{0}=(Y_{[1]}=Y_0\cup Y_1, D_1).
\]
Let 
\[ \sS'=\pi^*\sS^0,\quad \varphi'=\pi^*\varphi^0: V\otimes \sO_{\cY^{1}} \to  \sS'.
\]
Then clearly, the restriction of $\varphi'$ to the central fiber $Y_{[1]}$ of $\cY_1$ is the pullback of the stable pair $(\sF_0,\psi_0)$ on $X$. Because $\psi_0$ is not surjective, it fits into a short exact sequence 
\[  0\to\image\psi_0\to \sF_0\stackrel{\gamma}\to \coker\psi_0\to 0
\]
with $\coker\psi_0\ne 0$. Let $p_1:Y_1\to X$ be the projection. Consider the natural surjection 
\[ \mu: \sS'\to \sS'|_{Y_1}=p_1^*\sF_0\stackrel{p_1^*\gamma}\lra p_1^*\coker\psi_0. 
\]
Let $\sS^1=\ker\mu$. Then there is a commutative diagram 
\begin{equation}\label{}
\xymatrix{
0 \ar[r] & V\otimes\sO_{\cY^{1}} \ar@{=}[r]\ar[d]^{\varphi^1} & V\otimes\sO_{\cY^{1}}  \ar[d]^{\varphi'}\ar[r] & 0 \ar[d]\ar[r] & 0 \\
0 \ar[r] &\sS^1\ar[r] & \sS' \ar[r]^-{\mu} & p_1^*\coker\psi_0 \ar[r] & 0
}\end{equation}
Consider the restriction of $\varphi^1$ to the central fiber $(Y_{[1]}, D_1)$ of $(\cY^1,\cD^1)$. If the restriction of $\varphi^1$ to the divisor $D_1\subset Y_{[1]}$ is surjective, then it is easy to verify that  $(\sS^1,\varphi^1)$ is a complete stable pairs. Otherwise, we continue this process to obtain a family of expanded pairs $(\cY^{2},\cD^{2})\to \Delta$ and a homomorphism 
$\varphi^2: V\otimes \sO_{\cY^2}\to \sS^2$. Because the length of $\coker\varphi^i$ decreases, after finite steps, this process will terminate and we obtain a family of expanded stable pairs extending $\varphi^\star$.  

Next we show the uniqueness of extension. Suppose there are two flat families of expanded stable pairs 
\[ \varphi_j: V\otimes \sO_{\cY_j}\to \sS_j,\quad j=1,2
\]
such that they are isomorphic over $\Delta^*$. We show that this isomorphism extends over $0\in \Delta$. Again we assume after base change if necessary, $(\cY_j,\cD_j)\times_\Delta \Delta^*=(Y,D_+)\times \Delta^*$. By the construction of expanded pairs, we can find a third expanded pair $(\cY,\cD)\to\Delta$ and dominant morphisms 
\[ \pi_1: (\cY,\cD)\to (\cY_1,\cD_1),\quad \pi_2: (\cY,\cD)\to (\cY_2,\cD_2)
\] 
so that they are isomorphisms over $\Delta^*$. Pulling back $\varphi_1$ and $\varphi_2$ to $(\cY,\cD)$ respectively, we obtain two families of homomorphisms 
\[\widetilde\varphi_j: V\otimes \sO_{\cY}\to \widetilde\sS_j:=\pi_j^*\sS_j,\quad j=1,2
\] 
and will show that $\widetilde\varphi_1\cong\widetilde\varphi_2$. 

Recall that $\widetilde \varphi_1^\star\cong\widetilde \varphi_2^\star$ is the pullback $p^*\psi^\star$ of a family of surjective stable pairs $(\sF^\star, \psi^\star)$ on $X\times \Delta^*$ . Since an expanded stable pair on $Y=X\times \bP^1$ can be viewed as an ordinary stable pair with two dimensional sheaf on $Y$, $\widetilde \varphi_j^\star=p^*\psi^\star$ is a family of ordinary stable pairs parameterized by $\Delta^*$.  By the properness of moduli of stable pairs, there is a unique family of stable pairs $\psi:V\otimes \sO_{X\times \Delta}\to \sF$ extending $\psi^\star$, and 
\[ p^*\psi: V\otimes \sO_{\cY}\to p^*\sF
\]
is the unique family of stable pairs extending $p^*\psi^\star$, where $p: \cY\to X\times \Delta$ is the projection.

Using the procedure of stable reduction, we see that 
\[ \widetilde \sS_j\subseteq p^*\sF
\] 
and $\widetilde\varphi_j$ are compatible with $p^*\psi$. We form 
\[ \widetilde\varphi_1+\widetilde\varphi_2: V\otimes \sO_{\cY}\to\widetilde \sS_1+\widetilde \sS_2.
\] 
By Lemma \ref{lemm_adm} and Proposition \ref{prop_cri}, it is again a family of expanded stable pairs that is isomorphic to $\varphi_j$ over $\Delta^*$. Therefore, without loss of generality, we assume $\widetilde \sS_1\subseteq \widetilde \sS_2$. Let 
\[ (\cY,\cD)\times_\Delta 0=(Y_0\cup Y_1\cup\cdots\cup  Y_n, D_n)
\]  
be the central fiber of $(\cY,\cD)\to \Delta$. It induces homomorphisms 
\[ \widetilde \sS_1|_{D_i}\to \widetilde \sS_2|_{D_i},\quad i=-1,0,\cdots, n
\]
that is compatible to $\widetilde\varphi_1|_{D_i}$ and $\widetilde\varphi_2|_{D_i}$. Note that the restriction of $\widetilde\varphi_i$ to $D_{-1}$ are stable pairs. By the separatedness of the moduli functor of stable pairs, we have $\widetilde\varphi_1|_{D_{-1}}\cong\widetilde\varphi_2|_{D_{-1}}$. On the other hand, $\widetilde\varphi_j|_{D_n}$ are both surjective, and hence $\widetilde\varphi_1|_{D_{n}}\cong\widetilde\varphi_2|_{D_{n}}$. By the property of expanded stable pairs, we must have $\widetilde\varphi_1\cong\widetilde\varphi_2$. 

Having $\widetilde\varphi_1\cong\widetilde\varphi_2$, we let $I_1\subset \{0,1,\cdots,n\}$ be the set of indices so that the components $Y_i$ for $i\in I_1$ are collapsed by the morphism $\pi_1$. Likewise, define the subset $I_2\subset \{0,1,\cdots,n\}$ for the morphism $\pi_2$. We claim that $I_1=I_2$, for otherwise, there is a component $Y_j$ on which the sheaf 
\[ \widetilde\sS_1|_{Y_j} \cong \widetilde\sS_2|_{Y_j} 
\] 
is a trivial admissible sheaf which contradicts to an expanded stable pair. So we conclude that the families $(\cY_i,\cD_i)$ are isomorphic to $(\cY,\cD)$, and hence the families $\varphi_i$ are isomorphic to  $\widetilde\varphi_i$. Therefore, $\varphi_1\cong\varphi_2$ over $\Delta$.


\section{Moduli space of complete stable pairs on $\bP^1$}
 
Let $\M^{r,n}_{\bP^1}(V)$ be the moduli space of stable pairs $(\sF,\psi)$ on $\bP^1$ with $\rk(\sF)=r$ and $\deg(\sF)=n$. Then $\M^{r,n}_{\bP^1}(V)$ is a smooth projective variety over $\bk$. In this section we let $r=1$ and $N=\dim V\ge 2$. The higher rank case need some modifications. Let $\fP^{1,n}_{\bP^1}(V)$ be the moduli space of complete stable pairs on $\bP^1$ with rank $1$ and degree $n$. We will show that  $\fP^{1,n}_{\bP^1}(V)$ is an iterated blowing-up of $\M^{1,n}_{\bP^1}(V)$ along smooth centers. 

For simplicity, we denote 
\[  M(n)=\M^{1,n}_{\bP^1}(V).
\] 
For any stable pair $(\sF, \psi)$ in $M(n)$, we have $\sF\cong\sO_{\bP^1}(n)$ and $\psi$ is a nonzero homomorphism. With the notation \[ W_n=H^0(\sO_{\bP^1}(n))
\] 
we have an isomorphism
\[ M(n)\cong \bP(V^*\otimes W_n)\cong\bP^{(n+1)N-1}.
\] 

For any $j=0,\cdots, n$, define the subset $M(n)_j\subseteq M(n)$ as
\[ M(n)_j=\{(\sO_{\bP^1}(n), \psi)\in M(n)\mid  \deg(\image \psi) \le j \}.
\] 
Then $M(n)_j$ is a Zariski closed subset of $M(n)$, and      
\[ M(n)_0\subset M(n)_1\subset \cdots\subset M(n)_{n-1}\subset M(n)_n=M(n).
\] 
It is obvious that $M(n)_0$ is isomorphic to $\bP(V^*)\times \bP(W_n)$, and the inclusion $M(n)_0\subset M(n)$ is the Segre embedding.  

Now we describe ideal sheaves of $M(n)_j$ in $M(n)$. 

Let $m$ be a positive integer.  For any stable pair $(\sO_{\bP^1}(n), \psi)$ in $M(n)$,  tensoring with $\sO_{\bP^1}(m)$, we obtain a homomorphism 
\[ \psi(m):\, V\otimes \sO_{\bP^1}(m) \to \sO_{\bP^1}(n+m).
\] 
Taking global sections of $\psi(m)$, we get a linear map 
\[ \Gamma_{\psi(m)}: V\otimes W_m\to W_{n+m}.
\]    
Therefore, we obtain a morphism 
\[ f_m:\,M(n)\to S=\bP\Hom(V\otimes W_m, W_{n+m}),\quad \psi\mapsto \Gamma_{\psi(m)}.
\]
There is an integer $m_0$ so that for all $m>m_0$, all sheaves $\ker\psi(m)$, $\image\psi(m)$ have vanishing higher cohomologies for all $\psi$ in $M(n)$; moreover, $f_m$ is a closed immersion, and for each $(\sO_{\bP^1}(n), \psi)$ in $M(n)$,  the length
\[ \length(\coker\psi)=\dim\coker \Gamma_{\psi(m)}.
\] 

There is a tautological sub-line bundle on $S$
\[ \sO_S(-1)\to \Hom(V\otimes W_m, W_{n+m})\otimes \sO_S
\]
corresponding to a nowhere zero universal homomorphism 
\[ u: V\otimes W_m\otimes \sO_S\to W_{n+m}\otimes\sO_S(1).
\]
Let $Z_r$ be the scheme of zeros of $\wedge^{r+1}u$ with ideal sheaf $\sI_{Z_r}$. Then $Z_0=\emptyset$, and points in $Z_r$ are linear maps of  rank at most $r$. Let 
\[ E_{r}= \Hom (\mathop \bigwedge^{r+1}(V\otimes W_m), \mathop \bigwedge^{r+1}W_{n+m}). 
\]
Then $u$ induces a homomorphism
\[ u_r: E_r\otimes \sO_S(-r-1)\to \sO_S
\]
and  $\sI_{Z_r}=\image u_r$. In conclusion, we have

\begin{lemm}\label{lemm_ideal}
Fix a large enough integer $m$.  Then $(\sO_{\bP^1}(n), \psi)\in M(n)_j$ if and only if the linear map $\Gamma_{\psi(m)}$ is contained in $Z_{m+1+j}$. Therefore, $M(n)\times_S Z_{m+1+j}=M(n)_j$, and the ideal sheaf of $M(n)_j$ in $M(n)$ is $\sI_{M(n)_j}=f_m^{-1}\sI_{Z_{m+1+j}}\cdot \sO_{M(n)}$.
\end{lemm}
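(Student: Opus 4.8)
The plan is to reduce the statement to a single rank computation for the linear map $\Gamma_{\psi(m)}$ and then translate that rank condition into the determinantal condition defining $Z_{m+1+j}$. First I would record that for any $(\sO_{\bP^1}(n),\psi)$ in $M(n)$ the subsheaf $\image\psi\subseteq\sO_{\bP^1}(n)$ is a nonzero rank-one subsheaf of a line bundle on the smooth curve $\bP^1$, hence is itself a line bundle $\sO_{\bP^1}(d)$ with $d=\deg(\image\psi)\le n$; by definition $d$ is exactly the quantity constrained in $M(n)_j$. Tensoring the factorization $V\otimes\sO_{\bP^1}\twoheadrightarrow\image\psi\hookrightarrow\sO_{\bP^1}(n)$ with $\sO_{\bP^1}(m)$, and using that tensoring by a line bundle preserves images, gives $\image\psi(m)=\sO_{\bP^1}(d+m)$.

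Next I would compute $\rk\Gamma_{\psi(m)}$. For $m$ larger than the integer $m_0$ fixed just before the statement, the sheaves $\ker\psi(m)$ and $\image\psi(m)$ have vanishing higher cohomology, so applying $H^0$ to $0\to\ker\psi(m)\to V\otimes\sO_{\bP^1}(m)\to\image\psi(m)\to 0$ is exact and the induced map $V\otimes W_m\to H^0(\image\psi(m))$ is surjective. Composing with the inclusion $H^0(\image\psi(m))\hookrightarrow W_{n+m}$ identifies $\image\Gamma_{\psi(m)}$ with $H^0(\sO_{\bP^1}(d+m))=W_{d+m}$, whence $\rk\Gamma_{\psi(m)}=\dim W_{d+m}=d+m+1$. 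Consequently $\Gamma_{\psi(m)}$ has rank at most $m+1+j$ if and only if $d\le j$, i.e. if and only if $(\sO_{\bP^1}(n),\psi)\in M(n)_j$. Since $Z_r\subset S$ is set-theoretically the locus of linear maps of rank at most $r$, this already yields the first assertion $f_m^{-1}(Z_{m+1+j})=M(n)_j$ at the level of sets.

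Finally I would upgrade this to the scheme-theoretic statements. Because $m$ is chosen so that $f_m$ is a closed immersion, the scheme-theoretic preimage $M(n)\times_S Z_{m+1+j}$ is a closed subscheme of $M(n)$ supported on $M(n)_j$, and by the universal property of the fiber product along a closed immersion its ideal sheaf is the inverse-image ideal $f_m^{-1}\sI_{Z_{m+1+j}}\cdot\sO_{M(n)}$, namely the image of $f_m^*\sI_{Z_{m+1+j}}\to\sO_{M(n)}$; this is precisely the claimed formula and furnishes $M(n)_j$ with its scheme structure. The step that requires the most care is exactly this last one: one must check that the determinantal structure on $Z_{m+1+j}$ pulls back to the intended structure on $M(n)_j$, not merely that the supports agree. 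This is handled by observing that $f_m^*u$ is canonically the family of maps $\Gamma_{\psi(m)}$ over $M(n)$, so that $f_m^{-1}(Z_{m+1+j})$ is cut out by the $(m+2+j)\times(m+2+j)$ minors of this family, matching $\sI_{Z_{m+1+j}}=\image u_{m+1+j}$ after pullback.
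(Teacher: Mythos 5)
Your proposal is correct and follows essentially the same route as the paper, which states this lemma as an immediate consequence of the preceding setup: your rank computation $\rk\Gamma_{\psi(m)}=\deg(\image\psi)+m+1$ (via cohomology vanishing for $\ker\psi(m)$ and $\image\psi(m)$) is exactly equivalent to the paper's asserted identity $\length(\coker\psi)=\dim\coker\Gamma_{\psi(m)}$, and the scheme-theoretic conclusion is then the standard fact that the fiber product along a closed subscheme has the inverse-image ideal, just as in the paper.
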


Next we construct varieties $M(n)^{(i)}$ as successive blowing-ups of $M(n)$ for $0\le i\le n$. Let  
\[ M(n)^{(0)}=M(n),\quad  M(n)_j^{(0)}=M(n)_j, \,\, j=0,\cdots,n.  
\]
Suppose we have defined $M(n)^{(k)}$ and $M(n)^{(k)}_j$, $0\le j\le n$, we let 
\[ \pi_{k+1}: M(n)^{(k+1)}\to M(n)^{(k)}
\] 
be the blowing-up of $M(n)^{(k)}$ along $M(n)_{k}^{(k)}$. Let $M(n)_{k}^{(k+1)}$ be the inverse image of $M(n)_{k}^{(k)}$. It is the exceptional divisor of $\pi_{k+1}$. For any $j\ne k$, let $M(n)_j^{(k+1)}$ be the strict transform of $M(n)_j^{(k)}$. It follows that for $j\le k$, $M(n)_{j}^{(k+1)}$ are divisors of $M(n)^{(k+1)}$, and there are inclusions of subvarieties
\[ M(n)^{(k+1)}_{k+1}\subset   \cdots\subset M(n)^{(k+1)}_{n-1}\subset M(n)^{(k+1)}.
\] 
In particular, $M(n)^{(n)}$ is a projective variety, together with $n$ distinguished divisors 
\[ M(n)_0^{(n)}, \,M(n)_1^{(n)},\, \cdots, \,M(n)_{n-1}^{(n)}.
\] 

\medskip

\begin{thm}\label{thm_isom}
There is an isomorphism 
\[ \fP^{1,n}_{\bP^1}(V)\cong M(n)^{(n)}. 
\]
\end{thm}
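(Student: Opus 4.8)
The plan is to realize the natural morphism $\fP^{1,n}_{\bP^1}(V)\to M(n)$, which sends an expanded stable pair to its underlying stable pair $\psi_0=\varphi|_{\cD_{-1}}$, as the composite of the blowing-ups $\pi_n\circ\cdots\circ\pi_1$, and to produce an inverse by constructing a universal complete stable pair over $M(n)^{(n)}$. First I would reduce the blow-up centers to degeneracy loci of a single map and bring in complete collineations. Recall that $M(n)\cong\bP(V^*\otimes W_n)$ carries a tautological stable pair, and that for $m\gg 0$ the morphism $f_m\colon M(n)\to S=\bP\Hom(V\otimes W_m,W_{n+m})$ is a closed immersion with $\sI_{M(n)_j}=f_m^{-1}\sI_{Z_{m+1+j}}\cdot\sO_{M(n)}$ by Lemma~\ref{lemm_ideal}. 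Hence blowing up $M(n)$ along the chain $M(n)_0\subset M(n)_1\subset\cdots$ is the restriction to $M(n)$ of the blowing-up of $S$ along the successive rank loci $Z_r$ of the universal homomorphism $u$. I would first verify that the strict transforms $M(n)_k^{(k)}$ are smooth, so that the centers are smooth as asserted; this is exactly the local analysis underlying the space of complete collineations \cite{V}, transported along $f_m$.

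Next I would pass from complete collineations to the universal complete stable pair. By the blowing-up description of the space of complete collineations, $M(n)^{(n)}$ carries a canonical resolution of $u$: a sequence $u_0,u_1,\dots$ in which $u_0$ is the pullback of $u$ and, on each exceptional divisor $M(n)_k^{(n)}$, the map $u_{k+1}$ is defined on $\ker u_k$ with values in $\coker u_k$, both now locally free. Twisting back by $\sO_{\bP^1}(-m)$ and using that for $m\gg 0$ the functor $H^0\big(-\otimes\sO_{\bP^1}(m)\big)$ is exact and faithful on all of $\image\psi_i$, $\ker\psi_i$, $\coker\psi_i$, I would translate $(u_0,u_1,\dots)$ into a sequence $(\psi_0,\psi_1,\dots)$ of the type in Definition~\ref{def_csp1}, and then apply the elementary-modification and gluing construction of Section~1 (Lemma~\ref{lemm_tFi} together with the criterion of Proposition~\ref{prop_cri}) to obtain a family of expanded stable pairs over $M(n)^{(n)}$. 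This yields a morphism $\Psi\colon M(n)^{(n)}\to\fP^{1,n}_{\bP^1}(V)$. The expanded-pair formalism is indispensable here: it is precisely what keeps the family flat across the loci $M(n)_k^{(n)}$, where the rank of $\image\psi_i$ jumps, so that one never takes naive kernels or cokernels in families.

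Conversely, the universal family over $\fP^{1,n}_{\bP^1}(V)$ restricts on $\cD_{-1}$ to a family of stable pairs, giving $\fP^{1,n}_{\bP^1}(V)\to M(n)$; because the expansions trivialize the degeneracy ideals one at a time, the pullbacks of $\sI_{M(n)_0},\sI_{M(n)_1},\dots$ become invertible in succession, so the universal property of blowing up lifts this to a morphism $\Phi\colon\fP^{1,n}_{\bP^1}(V)\to M(n)^{(n)}$. Over the dense open locus where $\psi_0$ is already surjective both spaces coincide with $M(n)$ and $\Phi,\Psi$ restrict to the identity there; since $M(n)^{(n)}$ is a smooth projective variety and $\fP^{1,n}_{\bP^1}(V)$ is proper, and the two morphisms are mutually inverse on a dense open, I would conclude that $\Phi$ and $\Psi$ are mutually inverse everywhere, which proves the isomorphism. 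I expect the main obstacle to be the family-theoretic matching in the two middle steps: checking that the complete-collineation data carried by the exceptional divisors corresponds to the maps $\psi_{k+1}$ \emph{flatly in families and compatibly with the gluing of Section~1}, which is exactly where the expanded-pair machinery must carry the weight; verifying smoothness of the centers $M(n)_k^{(k)}$ is the other technical input.
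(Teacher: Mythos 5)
Your overall route is the same as the paper's --- both proofs go through complete collineations, using Lemma~\ref{lemm_ideal} to identify the centers $M(n)_j$ with rank--degeneracy loci of the universal map $u$ on $S=\bP\Hom(V\otimes W_m,W_{n+m})$, and both construct morphisms in the two directions --- but two of your steps have genuine gaps. The first is in your construction of $\Psi$: you propose to translate the collineation resolution data $u_{k+1}$ carried by $M(n)^{(n)}$ back into sheaf homomorphisms $\psi_{k+1}\colon\ker\psi_k\to\coker\psi_k$ by ``twisting back by $\sO_{\bP^1}(-m)$''. Exactness of $H^0(-\otimes\sO_{\bP^1}(m))$ only gives that the map from sheaf maps to linear maps is \emph{injective}, not surjective: $\Hom(\ker\psi_k,\coker\psi_k)$ has dimension $(N-1)\cdot\length(\coker\psi_k)$, while linear maps $H^0(\ker\psi_k(m))\to H^0(\coker\psi_k(m))$ form a space of dimension $h^0(\ker\psi_k(m))\cdot\length(\coker\psi_k)$, which is much larger. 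So most collineation data is \emph{not} of sheaf origin, and the assertion that the data arising on $M(n)^{(n)}$ is of sheaf origin is essentially the content of the theorem --- assuming it is circular. The paper avoids this wall entirely: its inverse morphism $G$ is constructed purely sheaf-theoretically, by pulling back the universal stable pair from $M(n)$ to $M(n)^{(n)}$ and running the blow-up/elementary-modification procedure of the properness proof (Section 2.3); the global-sections functor is only ever applied in the easy direction (sheaves to linear algebra), namely to define $F\colon\fP^{1,n}_{\bP^1}(V)\to S^{(n+m)}$, which combined with the identity $M(n)^{(n)}=M(n)\times_S S^{(n+m)}$ produces the forward map without invoking the universal property of the iterated blow-up on $\fP^{1,n}_{\bP^1}(V)$ (your construction of $\Phi$ would additionally require showing the successively pulled-back ideals are invertible there, which is also not free).

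The second gap is your concluding step, ``mutually inverse on a dense open, hence everywhere''. This is fine for $\Phi\circ\Psi$, because $M(n)^{(n)}$ is integral; but for $\Psi\circ\Phi=\mathrm{id}$ on $\fP^{1,n}_{\bP^1}(V)$ you would need (i) the locus of length-zero pairs to be \emph{dense} in $\fP^{1,n}_{\bP^1}(V)$ and (ii) $\fP^{1,n}_{\bP^1}(V)$ to be reduced. Neither is known prior to the theorem: a priori the moduli space could have components (or embedded structure) supported over the boundary, consisting entirely of complete stable pairs of positive length, and density of the open locus is exactly the irreducibility statement the theorem is meant to deliver. The repair --- and what the paper's ``standard to verify'' amounts to --- is to check the two composites are the identity functorially, on families: applying $G\circ F$ to a family of expanded stable pairs reproduces the same family up to the equivalence of Section 1, and likewise for $F\circ G$ on the universal family over $M(n)^{(n)}$; no density or reducedness argument is then needed. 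Your preliminary step of proving smoothness of the centers $M(n)_k^{(k)}$ via Vainsencher's local analysis is not needed for this theorem (the paper proves it afterwards, via Proposition~\ref{prop_isom}), though it is harmless.
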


\begin{proof}
Fix a large enough integer $m$. Recall that $S=\bP\Hom(V\otimes W_m, W_{n+m})$. Let $S^{(n+m)}$ be the space of complete collineations from $V\otimes W_m$ to $W_{n+m}$. It is an iterated blowing up of $S$. 

Define a map $\fP^{1,n}_{\bP^1}(V)\to S^{(n+m)}$ as follows. Let $(\sO_{\bP^1}(n),\psi_0,\cdots,\psi_l)$ be a complete stable pair. Taking global sections of $\psi_i(m)$, we obtain a sequence of linear maps
\begin{align*}
&\Gamma_{\psi_0(m)}:  V\otimes W_m\to W_{n+m}, \\
&\Gamma_{\psi_1(m)}:  H^0(\ker \psi_0(m))  \to H^0(\coker\psi_0(m)), \\
&\hskip 50pt \vdots  \\
&\Gamma_{\psi_l(m)}:  H^0(\ker \psi_{l-1}(m))  \to H^0(\coker\psi_{l-1}(m)).
\end{align*}
By the choice of $m$, $\ker \psi_i(m)$ and $\coker\psi_i(m)$ have vanishing higher cohomologies. It follows that $\{\Gamma_{\psi_i(m)}\}_{i=0,\cdots,l}$ is a complete collineation from $V\otimes W_m$ to $W_{n+m}$, i.e., it is an element in $S^{(n+m)}$. Therefore, we have a commutative diagram 
\[ \xymatrix{
\fP^{1,n}_{\bP^1}(V) \ar[r] \ar[d] & S^{(n+m)} \ar[d] \\
M(n) \ar[r] & S
} 
\]
Since $M(n)^{(n)}=M(n)\times_S S^{(n+m)}$, we obtain a morphism $F:\fP^{1,n}_{\bP^1}(V)\to M(n)^{(n)}$. 

Conversely, there is a universal stable pair on $\bP^1\times M(n)$. Its pullback to $\bP^1\times M(n)^{(n)}$ is again a family of stable pairs. Similar to the proof of properness of $\fP^{1,n}_{\bP^1}(V)$, the blowing-up construction induces a family of expanded stable pairs on an expanded pair parametrized by $M(n)^{(n)}$, and it induces a morphism $G: M(n)^{(n)}\to \fP^{1,n}_{\bP^1}(V)$. It is standard to verify that $G$ is the inverse of $F$, and hence they are isomorphisms. 
\end{proof}

\begin{prop}\label{prop_isom}
Let $\Hilb^{m}$ be the Hilbert scheme of $m$ points on $\bP^1$. For any $0\le k\le n$, there is an isomorphism 
\begin{equation}\label{isom}
f: \,  M(n)_k^{(k)} \cong M(k)^{(k)}\times \Hilb^{n-k}.
\end{equation} 
so that for any $j\le k$, $f(M(n)^{(k)}_j)=M(k)_j^{(k)}\times\Hilb^{n-k}$. 
\end{prop}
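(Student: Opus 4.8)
The plan is to understand the locus $M(n)_k^{(k)}$ geometrically and identify it with the claimed product. First I would recall what $M(n)_k$ parameterizes: by definition it consists of stable pairs $(\sO_{\bP^1}(n),\psi)$ with $\deg(\image\psi)\le k$. For a generic such pair the image $\image\psi$ has degree exactly $k$, so it is a subsheaf $\sO_{\bP^1}(k)\hookrightarrow\sO_{\bP^1}(n)$, and the cokernel $\coker\psi$ is a length-$(n-k)$ torsion sheaf supported at $n-k$ points (counted with multiplicity) of $\bP^1$. The key observation is that the data of a point in $M(n)_k$ naturally separates into (i) the stable pair $V\otimes\sO_{\bP^1}\to\image\psi\cong\sO_{\bP^1}(k)$, which is a point of $M(k)$, together with (ii) the quotient $\sO_{\bP^1}(n)\to\coker\psi$, i.e. a length-$(n-k)$ subscheme, which is a point of $\Hilb^{n-k}$. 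This suggests the isomorphism $M(n)_k\cong M(k)\times\Hilb^{n-k}$ at the level of $k=0$ blowing-ups, and I expect the blown-up version to follow by compatibility.

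Second I would set up the map carefully. The inclusion $\image\psi\hookrightarrow\sO_{\bP^1}(n)$ realizing $\sO_{\bP^1}(k)$ as a subsheaf is determined precisely by its cokernel, a point of $\Hilb^{n-k}$; conversely a length-$(n-k)$ subscheme $Z\subset\bP^1$ determines the inclusion $\sO_{\bP^1}(k)\cong\sO_{\bP^1}(n)(-Z)\hookrightarrow\sO_{\bP^1}(n)$ up to scalar. Thus on the $(0)$-level I would construct $f$ as the morphism sending $(\sO_{\bP^1}(n),\psi)$ to the pair $\bigl((\image\psi,\psi),\,[\coker\psi]\bigr)\in M(k)\times\Hilb^{n-k}$, and check it is an isomorphism by writing down the inverse: given a stable pair of degree $k$ and a length-$(n-k)$ scheme $Z$, compose with the twisting inclusion $\sO_{\bP^1}(k)\hookrightarrow\sO_{\bP^1}(n)$. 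I would verify that under $f$ the subloci match up, namely $f(M(n)_j)=M(k)_j\times\Hilb^{n-k}$ for $j\le k$, which is immediate since $\deg(\image\psi)$ is intrinsic to the $M(k)$-factor and unaffected by the choice of $Z$.

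Third I would promote this to the iterated blowing-ups. The idea is that the blowing-up centers on the $M(n)$-side, namely $M(n)_0^{(0)},\dots,M(n)_{k-1}^{(k-1)}$, restrict along $f$ precisely to $M(k)_0^{(0)}\times\Hilb^{n-k},\dots,M(k)_{k-1}^{(k-1)}\times\Hilb^{n-k}$, because blowing-up commutes with the product by the flat factor $\Hilb^{n-k}$ and the ideal sheaves $\sI_{M(n)_j}$ pull back, via the description in Lemma \ref{lemm_ideal}, compatibly with those of $M(k)_j$. Concretely I would argue inductively: assuming $M(n)_k^{(i)}\cong M(k)_k^{(i)}\times\Hilb^{n-k}$ compatibly with the subloci, blowing up $M(n)_i^{(i)}$ and $M(k)_i^{(i)}\times\Hilb^{n-k}$ respectively yields the next stage, using that blowing up a product along (center)$\times$(factor) is the product of the blow-up with the factor. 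The strict transform and exceptional divisor then match, giving the isomorphism at level $k$ where the center $M(n)_k^{(k)}$ has become the relevant distinguished divisor.

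The main obstacle I expect is the inductive compatibility of the ideal sheaves and blowing-up centers in the third step, rather than the set-theoretic identification. Specifically, one must verify that the scheme structure on $M(n)_j$ given by $\sI_{M(n)_j}=f_m^{-1}\sI_{Z_{m+1+j}}\cdot\sO_{M(n)}$ restricts correctly to the product scheme structure on $M(k)_j\times\Hilb^{n-k}$, and that strict transforms behave well under the product decomposition at every stage. The cleanest way to handle this is likely to show that the determinantal/rank description of the $M(n)_j$ via the universal homomorphism $u$ is compatible with the factorization $\psi=\iota\circ\psi'$ into the twisting inclusion $\iota$ and the degree-$k$ stable pair $\psi'$; since $\iota$ contributes only the $\Hilb^{n-k}$-factor and does not change ranks of the relevant global-section maps beyond a fixed shift, the rank loci factor through the $M(k)$-factor, and the iterated blow-up construction is therefore the product of the $M(k)$-construction with the inert factor $\Hilb^{n-k}$.
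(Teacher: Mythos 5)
Your argument fails at its foundation: the level-zero isomorphism $M(n)_k\cong M(k)\times\Hilb^{n-k}$ that you assert in steps one and two is false, and the proposition deliberately claims it only after $k$ blow-ups. Your map $f$, sending $\psi$ to $\bigl((\image\psi,\psi),[\coker\psi]\bigr)$, is only defined on the open locus $M(n)_k- M(n)_{k-1}$: on the deeper stratum the image $\image\psi$ has degree $j<k$, so $(\image\psi,\psi)$ is not a point of $M(k)$ and $\coker\psi$ has length $n-j>n-k$. Your proposed inverse (the paper's map $g:(\psi,s)\mapsto s\psi$, with $s\in\bP\Hom(\sO_{\bP^1}(k),\sO_{\bP^1}(n))\cong\Hilb^{n-k}$) is defined everywhere but is \emph{not injective} over $M(n)_{k-1}$: a pair whose image has degree $j<k$ admits one factorization $s\psi'$ for each way of splitting its degree-$(n-j)$ cokernel divisor into a degree-$(n-k)$ piece and a degree-$(k-j)$ piece. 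Concretely, for $n=2$, $k=1$, the map $g:\bP(V^*\otimes W_1)\times\bP^1\to M(2)_1$ is generically two-to-one over $M(2)_0$ (the two orderings of the linear factors of the common quadratic), so $M(2)_1$ is non-normal and cannot be isomorphic to the smooth variety $M(1)\times\Hilb^{1}$. This is why the paper records only the weaker facts that $g$ restricts to an isomorphism $(M(k)-M(k)_{k-1})\times\Hilb^{n-k}\cong M(n)_k-M(n)_{k-1}$ and that $g^{-1}(M(n)_j)=M(k)_j\times\Hilb^{n-k}$.

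Consequently your third step has nothing to rest on. The fact that blowing up a product along (center)$\times$(flat factor) gives (blow-up)$\times$(factor) does identify $M(k)^{(k)}\times\Hilb^{n-k}$ as the iterated blow-up of $M(k)\times\Hilb^{n-k}$, but that space is not $M(n)_k$, so this says nothing about $M(n)_k^{(k)}$, which is the iterated strict transform of $M(n)_k$ inside the ambient blow-ups of $M(n)$. The entire content of the proposition is that the finite, birational, non-injective map $g$ \emph{becomes} an isomorphism after the blow-ups, and this requires a genuine argument, which is what the paper supplies for $k=1$: given $h:T\to M(n)_1$ with $h^{-1}\sI_0\cdot\sO_T$ invertible, the determinantal presentation $\sI_{Z_{m+1}}=\image u_{m+1}$ of Lemma \ref{lemm_ideal} yields a morphism $\tau:T\to\bP E_{m+1}$ over $S$; the wedge-power maps out of $M(1)$ and $\Hilb^{n-1}$ compose to a morphism $M(1)\times\Hilb^{n-1}\to\bP E_{m+1}$, and properness forces $\tau$ to factor through $M(1)\times\Hilb^{n-1}$ and then, via the blow-up universal property, through $M(1)^{(1)}\times\Hilb^{n-1}$; the case $k>1$ additionally needs ideal-sheaf relations parallel to Theorem 2.4(8) of \cite{V}. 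In other words, it is the morphism to $\bP E_{m+1}$ that separates the multiple factorizations which $g$ conflates, and you correctly sensed that the determinantal ideals and the factorization $\psi=\iota\circ\psi'$ matter, but you located the difficulty in the wrong place: the obstacle is precisely the failure of the set-theoretic (and scheme-theoretic) identification at level zero, not merely the bookkeeping of strict transforms.
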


We sketch the idea of the proof. 
For any $k$, let $(\sO_{\bP^1}(k), \psi)$ be a stable pair in $M(k)$. Let $s: \sO_{\bP^1}(k)\to\sO_{\bP^1}(n)$ be a nonzero homomorphism. Then $(\sO_{\bP^1}(n), s \psi)$ is a stable pair in $M(n)$. Since there is an isomorphism $\Hilb^{n-k}\cong \bP\Hom(\sO_{\bP^1}(k),\sO_{\bP^1}(n))$, we obtain a map
\[ g:\, M(k)\times \Hilb^{n-k}\to M(n),\quad (\psi, s)\mapsto s\psi, 
\]
whose image is $M(n)_k$, and it induces an isomorphism 
\[  (M(k)- M(k)_{k-1}) \times \Hilb^{n-k} \cong M(n)_k- M(n)_{k-1}.
\]  
Moreover, for any $j\le k$,
\begin{equation}\label{inverse}
 g^{-1}(M(n)_j)= M(k)_j\times \Hilb^{n-k}.
\end{equation}
It follows by induction that there are induced morphisms 
\[  g_i:  M(k)^{(i)}\times \Hilb^{n-k}   \to  M(n)_k^{(i)}, \quad 0\le i\le k. 
\]  

To show that $g_k$ is an isomorphism, we use induction on $k$. 

For $k=1$. Let $\sI_j$ be the ideal sheaf of $M(n)_j$ in $M(n)$. Recall that $M(n)_1^{(1)}$ is the strict transform of $M(n)_1$ under the blowing-up of $M(n)$ along $M(n)_0$. By the universal property of blowing up, for any scheme $T$ and any morphism $h: T\to M(n)_1$ such that $h^{-1}\sI_{0}\cdot\sO_T$ is invertible, there is a unique morphism $\widetilde h:T\to M(n)_1^{(1)}$ factoring $h$. To show that $g_1$ is an isomorphism, it suffices to show that there is a unique morphism 
\[ \widetilde t: T\to   M(1)^{(1)}\times \Hilb^{n-1}  
\] 
factoring $\widetilde h$. 
\[ \xymatrix{
 T\ar@/^2pc/^{\widetilde h}[rrr] \ar@{-->}@/_1pc/_{t_0}[drr]\ar[drrr]^(.7)h\ar@{-->}^-{\widetilde t}[rr]& &M(1)^{(1)}\times\Hilb^{n-1}  \ar[d] \ar[r]_-{g_1} &M(n)_1^{(1)} \ar[d]^{\pi_1}    \\
& & M(1)  \times \Hilb^{n-1} \ar[r]_-{g} & M(n)_1 \\
}\]

Note that by Lemma \ref{lemm_ideal}, $M(n)_1\subset S$ and $\sI_{0}=\sI_{Z_{m+1}}\cdot \sO_{M(n)_1}$. 
Recall that the ideal sheaf $\sI_{Z_{m+1}}$ is the image of the homomorphism 
\[ u_{m+1}: E_{m+1}\otimes \sO_{S}(-m-2)\to \sO_{S}
\] 
Its pullback to $T$ is $h^* (E_{m+1}\otimes \sO_{S}(-m-2))\to \sO_{T}$ whose image is $h^{-1}\sI_{0}\cdot \sO_T$.  Since $h^{-1}\sI_{0}\cdot \sO_T$ is invertible,  it induces a morphism $\tau: T\to \bP E_{m+1}$ over $S$. Observe that there are natural morphisms 
\[ M(1)\to \bP\Hom \Big(\mathop \bigwedge^{m+2}(V\otimes W_m), \mathop \bigwedge^{m+2}W_{1+m}\Big),
\]
\[ \Hilb^{n-1}\to  \bP\Hom \Big(\mathop \bigwedge^{m+2}W_{1+m}, \mathop \bigwedge^{m+2}W_{n+m}\Big).
\]
Since $\mathop \bigwedge^{m+2}W_{1+m}$ is a one dimensional vector space, taking composition of linear maps, we obtain a morphism 
\[ M(1)\times\Hilb^{n-1}\to \bP\Hom \Big(\mathop \bigwedge^{m+2}(V\otimes W_m), \mathop \bigwedge^{m+2}W_{n+m}\Big)=\bP E_{m+1}.
\]
Now since $\tau$ sends the open dense subscheme $T-h^{-1}(M(n)_{0})$ into the proper scheme $M(1)\times\Hilb^{n-1}$,  $\tau$ factors through $t_0$.  

Having $t_0$, note that $M(1)^{(1)} \times \Hilb^{n-1}$ is the blowing-up of $M(1) \times \Hilb^{n-1}$ along the subvariey $M(1)_0 \times \Hilb^{n-1}$ with ideal sheaf $\sJ_{0}=g^{-1}\sI_{0}\cdot \sO_{M(1) \times \Hilb^{n-1}}$, and by $gt_0=\pi_1\widetilde h$ and $\pi_1$ being a blowing-up, we see that $t_0^{-1}\sJ_{0}\cdot \sO_T$ is invertible. There is a unique morphism $\widetilde t:T\to M(1)^{(1)} \times \Hilb^{n-1}$ factoring $t_0$. 

For $k>1$, we need to use the result on the relations between ideal sheaves of $M(n)_j^{(i)}$ in $M(n)^{(i)}$. These relations are parallel to Theorem 2.4 (8) in \cite{V} and we omit it. 
 
\begin{rmk}
By Theorem \ref{thm_isom}, a point in $M(n)_k^{(k)}$ represents a sequence of nonzero homomorphisms  
\begin{align*}
&\psi_0:  V\otimes\sO_{\bP^1}\to \sO_{\bP^1}(n), \\  
&\psi_1:  \ker \psi_0  \to\coker\psi_0, \\
&\hskip 50pt \vdots  \\
&\psi_l:  \ker \psi_{l-1}  \to\coker\psi_{l-1},
\end{align*}
such that $l\le k$ and the lengths of $\coker \psi_0, \cdots, \coker \psi_l$ decrease to $n-k$.
\end{rmk}
 
\begin{thm}
For each $k=0,\cdots,n$, $M(n)_k^{(k)}$ is nonsingular. In particular, $M(n)^{(n)}$ is a nonsingular projective variety, together with $n$ smooth divisors 
\[ M(n)_0^{(n)}, M(n)_1^{(n)}, \cdots, M(n)_{n-1}^{(n)}.
\] 
\end{thm}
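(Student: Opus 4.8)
The plan is to reduce everything to the single assertion that $M(k)^{(k)}$ is nonsingular for every $k\ge 0$, and then prove the latter by induction on $k$. The reduction is immediate from Proposition~\ref{prop_isom}: the isomorphism $M(n)_k^{(k)}\cong M(k)^{(k)}\times \Hilb^{n-k}$ exhibits $M(n)_k^{(k)}$ as a product of $M(k)^{(k)}$ with the Hilbert scheme of $n-k$ points on $\bP^1$. Since $\bP^1$ is a smooth curve, $\Hilb^{n-k}$ is its $(n-k)$-th symmetric product, namely $\bP^{n-k}$, which is nonsingular. Hence $M(n)_k^{(k)}$ is nonsingular if and only if $M(k)^{(k)}$ is. Taking $n=k$ in Proposition~\ref{prop_isom} gives $M(k)_k^{(k)}=M(k)^{(k)}$ (the index $k$ is never a blowing-up center, so $M(k)_k^{(k)}$ is the strict transform of the whole space $M(k)_k=M(k)$, i.e.\ all of $M(k)^{(k)}$), so it suffices to treat the case $n=k$.

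I would prove $M(k)^{(k)}$ nonsingular by induction on $k$, with an inner induction over the successive blowing-ups. The base case $M(0)^{(0)}=M(0)\cong\bP(V^*)$ is clear. For the inductive step, recall that $M(k)^{(k)}$ is built from $M(k)^{(0)}=M(k)\cong\bP^{(k+1)N-1}$ by blowing up in turn along $M(k)_0^{(0)},M(k)_1^{(1)},\dots,M(k)_{k-1}^{(k-1)}$. I run an inner induction on the level $i$: assuming $M(k)^{(i)}$ is nonsingular, the center of the next blowing-up is $M(k)_i^{(i)}\cong M(i)^{(i)}\times\Hilb^{k-i}$ by Proposition~\ref{prop_isom}, and this is nonsingular because $M(i)^{(i)}$ is nonsingular by the outer inductive hypothesis (as $i<k$) and $\Hilb^{k-i}\cong\bP^{k-i}$ is smooth. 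The blowing-up of a nonsingular variety along a nonsingular closed center is again nonsingular, so $M(k)^{(i+1)}$ is nonsingular; at $i=k-1$ this yields $M(k)^{(k)}$ nonsingular and completes the induction. In particular $M(n)^{(n)}=M(n)_n^{(n)}$ is a nonsingular projective variety.

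It remains to see that the divisors $M(n)_j^{(n)}$, $0\le j\le n-1$, are smooth. Each such divisor is first produced at level $j+1$ as the exceptional divisor $M(n)_j^{(j+1)}$ of the blowing-up along the smooth center $M(n)_j^{(j)}$; as the projectivized normal bundle of a nonsingular center in a nonsingular variety it is a smooth divisor of $M(n)^{(j+1)}$. For $i>j$ it is carried along as the strict transform under the blowing-up along $M(n)_i^{(i)}$, so one must check that the strict transform stays smooth at each stage. This follows once one knows that the center $M(n)_i^{(i)}$ meets the divisor $M(n)_j^{(i)}$ cleanly, which is encoded in the relations among the ideal sheaves $\sI_{M(n)_j^{(i)}}$ in $M(n)^{(i)}$ that are parallel to Theorem~2.4(8) of \cite{V}; given these relations the strict transform of a smooth divisor under a blowing-up along a smooth center transverse to it is again smooth, so inductively $M(n)_j^{(n)}$ is smooth.

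I expect the genuine difficulty to lie in this last paragraph: the structural induction for $M(n)^{(n)}$ itself is clean once Proposition~\ref{prop_isom} is in hand, but controlling the strict transforms of all the divisors simultaneously requires the transversality (clean-intersection) bookkeeping of the successive centers against the accumulated exceptional divisors, exactly the Vainsencher-type ideal-sheaf relations that the paper imports from \cite{V}. A secondary point to verify carefully is that each blowing-up center $M(n)_i^{(i)}$ is embedded in $M(n)^{(i)}$ as a reduced, regularly embedded smooth subvariety, so that the elementary principle ``blow up smooth in smooth stays smooth'' legitimately applies at every stage.
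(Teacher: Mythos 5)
Your proposal is correct and takes essentially the same approach as the paper: the paper's entire proof is the single sentence that the result ``follows from Proposition \ref{prop_isom} and induction on $k$,'' which is exactly your double induction using $M(n)_k^{(k)}\cong M(k)^{(k)}\times \Hilb^{n-k}$ together with the fact that blowing up a nonsingular variety along a nonsingular center stays nonsingular. The only difference is that you spell out the bookkeeping the paper leaves implicit --- in particular the smoothness of the strict transforms of the divisors $M(n)_j^{(n)}$, which rests on the same Vainsencher-type ideal-sheaf relations that the paper itself invokes and omits in its sketch of Proposition \ref{prop_isom}.
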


\begin{proof}
This result follows from Proposition \ref{prop_isom} and induction on $k$.  
\end{proof}

 
\section{Complete quotients on $\bP^1$}

Let $X=\bP^1$. Let $\sG$ be a coherent sheaf on $\bP^1$. Let $\sG^{\tor}$ be the torsion part of $\sG$ and $\sG^{\tf}=\sG/\sG^{\tor}$. Then $\sG^{\tf}$ is locally free. Recall that \cite{Hu} a complete quotient of $V\otimes \sO_{\bP^1}$ of rank $r$ and degree $n$ is either
\begin{enumerate}
\item  a quotient $V\otimes \sO_{\bP^1}\to \sG$ in $\Quot_{\bP^1}^{r,n}(V)$ such that $\sG$ is locally free;  or,
\item  a sequence 
\begin{align*} 
& \rho: V\otimes \sO_{\bP^1}\to \sG_0, \\  
& \zeta_1: 0\to \sG_0^\tf \to\sG_1\to \sG_0^\tor\to 0, \\  
&\hskip 50pt \vdots  \\
& \zeta_m: 0\to \sG_{m-1}^\tf \to \sG_{m}\to \sG_{m-1}^\tor\to 0
\end{align*} 
for some integer $m\geq 1$ such that $\rho$ is a quotient in $\Quot_{\bP^1}^{r,n}(V)$; $\zeta_i$ is a non-split extension for every $1 \le i \le m$; and the last sheaf $\sG_{m}$ is the unique one that is locally free.
\end{enumerate}
 
Let $B_{k}$ be a subscheme supported on the subset
\[ 
\{ [\rho: V\otimes \sO_{\bP^1} \to \sG] \in \Quot_{\bP^1}^{r,n}(V)\mid \text{the torsion part of
 ${\sG}$ has length $\ge k$} \}.
\]
Then 
\[ B_{n} \subset B_{n-1} \subset \cdots\subset B_1 \subset B_{0} = \Quot_{\bP^1}^{r,n}(V).
\]
It was shown in \cite{Hu} that the space of complete quotients on $\bP^1$ is the successive blowing-up of $\Quot_{\bP^1}^{r,n}(V)$ along the subschemes $B_n, B_{n-1}$ etc. 

It is well known that taking dual establishes a correspondence between stable pairs and quotients. This correspondence generalizes naturally to their complete version. 

\begin{prop}
There is a natural one-to-one correspondence between the set of equivalence classes of complete stable pairs $\{\varphi_i: V\otimes \sO_{\bP^1}\to \sF_i\}_{0\le i\le m}$ and the set of equivalence classes of complete quotients $\{ \rho: V\dual\otimes \sO_{\bP^1}\to \sG_0,\zeta_1,\cdots,\zeta_m\}$. 
\end{prop}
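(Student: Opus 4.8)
The plan is to establish the correspondence by dualizing the defining exact sequences of a complete stable pair and matching them, term by term, with the exact sequences defining a complete quotient. For a stable pair $\varphi_0: V\otimes\sO_{\bP^1}\to\sF_0$ with $\sF_0$ pure of dimension one (hence locally free on $\bP^1$, of rank $r$ and degree $n$), applying $\sHom(-,\sO_{\bP^1})$ together with the local $\sExt$-sheaves produces a quotient map. More precisely, since $\sF_0=\image\varphi_0\oplus\text{(finite support data)}$ is captured by the two short exact sequences \eqref{Fi_1} and \eqref{Fi_2}, I would dualize $\sR_i=\image\varphi_i$ and $\sT_i=\coker\varphi_i$. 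The torsion sheaf $\sT_i$ has length equal to $\length(\coker\varphi_i)$, and $\sExt^1(\sT_i,\sO_{\bP^1})\cong\sT_i$ (up to a twist, using that $\bP^1$ is a smooth curve and $\sT_i$ has finite length), which is exactly the torsion part one expects to appear in the dual quotient picture.

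First I would set up the dual of the initial datum: starting from $\varphi_0:V\otimes\sO_{\bP^1}\to\sF_0$ with $\sF_0$ locally free, take duals to obtain $\rho:V\dual\otimes\sO_{\bP^1}\to\sF_0\dual$, but this is not quite a quotient, so instead I would use the standard correspondence (stated as ``well known'' in the excerpt) that sends a stable pair with cokernel of length $\ell$ to a quotient $V\dual\otimes\sO_{\bP^1}\to\sG_0$ whose torsion part has length $\ell$. Concretely, $\sG_0^{\tf}=(\image\varphi_0)\dual$ and $\sG_0^{\tor}\cong\coker\varphi_0=\sT_0$. Then I would proceed inductively: the short exact sequence \eqref{Fi}, namely $0\to\sT_i\to\sF_{i+1}\to\sR_i\to 0$, dualizes (invoking that $\sExt^1(\sR_i,\sO)=0$ for the locally free image and $\sExt^1(\sT_i,\sO)\cong\sT_i$) to a non-split extension $\zeta_{i+1}:0\to\sG_i^{\tf}\to\sG_{i+1}\to\sG_i^{\tor}\to 0$. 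The non-splitness of $\zeta_{i+1}$ must correspond exactly to the nonvanishing of $\psi_{i+1}$ (equivalently $\varphi_{i+1}\ne 0$) in the complete stable pair.

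The key translation to verify is that the terminal condition $\coker\varphi_m=0$ (first surjection) corresponds precisely to $\sG_m$ being the unique locally free sheaf in the complete quotient, and that the strict decrease $\length(\coker\varphi_0)>\cdots>\length(\coker\varphi_m)=0$ matches the strict decrease of torsion lengths $\length(\sG_0^{\tor})>\cdots>\length(\sG_m^{\tor})=0$. I would check this by noting that under duality $\length(\coker\varphi_i)=\length(\sG_i^{\tor})$ throughout. Finally I would verify that the correspondence respects equivalence relations on both sides: the scalars $\lambda_i$ in Definition \ref{def_csp1} and the isomorphisms $\theta_i$ in Definition \ref{def_csp2} dualize to the ambiguity in $\rho$ and the $\zeta_i$ up to scalar and isomorphism, and conversely.

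The main obstacle I anticipate is the careful bookkeeping of the dualizing functor on the extension classes: one must confirm that a \emph{non-split} extension $\zeta_{i+1}$ is produced exactly when $\psi_{i+1}$ (equivalently the gluing map in \eqref{Fi}) is nonzero, and that the induced maps $\sG_i^{\tor}\to\sG_{i+1}$ dualize correctly the surjections $\coker\varphi_i\to\coker\varphi_{i+1}$ of \eqref{phi}. Since $\bP^1$ is a smooth curve, the homological algebra is governed by the identifications $\sExt^1(\sT,\sO_{\bP^1})\cong\sT$ and $\sHom(\sT,\sO_{\bP^1})=0$ for finite-length $\sT$, so the obstruction is genuinely one of making these natural isomorphisms compatible across the whole sequence rather than any deep geometric difficulty; I expect it to reduce to a diagram chase establishing that duality is an exact anti-equivalence on the relevant subcategory of sheaves, sending complete stable pairs to complete quotients bijectively.
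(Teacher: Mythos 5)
Your overall strategy (dualize and match term by term) points in the right general direction, but two of your specific identifications are wrong, and they sit exactly where the content of the proof lies. First, the torsion-free part of $\sG_0$ is not $(\image\varphi_0)\dual$: by construction $\sG_0=\coker\bigl(\varphi_0\dual:\sF_0\dual\to V\dual\otimes\sO_{\bP^1}\bigr)$, so $\rk\,\sG_0=\dim V-\rk(\sF_0)$, whereas $(\image\varphi_0)\dual$ has rank $\rk(\sF_0)$. The correct identifications are $\sG_0^{\tf}\cong\sK_0\dual$ and $\sG_0^{\tor}\cong\sExt^1(\sT_0,\sO)$, where $\sK_0=\ker\varphi_0$ and $\sT_0=\coker\varphi_0$. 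Indeed, the whole correspondence runs through the flag of locally free kernels $\sK_m\subset\cdots\subset\sK_1\subset\sK_0\subset V\otimes\sO_{\bP^1}$ and the finite-length quotients $\sP_i=\coker(\sK_{i+1}\to\sK_i)$; these objects never appear in your proposal, and without them you cannot even name the sheaves $\sG_i^{\tf}$.

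Second, your inductive step fails. Dualizing \eqref{Fi}, i.e. $0\to\sT_i\to\sF_{i+1}\to\sR_i\to 0$, cannot produce the extension $\zeta_{i+1}$: since $\sHom(\sT_i,\sO)=0$, the long exact sequence degenerates into the isomorphisms $\sHom(\sF_{i+1},\sO)\cong\sHom(\sR_i,\sO)$ and $\sExt^1(\sF_{i+1},\sO)\cong\sExt^1(\sT_i,\sO)$; no extension of $\sG_i^{\tor}$ by $\sG_i^{\tf}$ appears, and in any case the subobject $\sG_i^{\tf}\cong\sK_i\dual$ of $\zeta_{i+1}$ is not among the terms of \eqref{Fi}. (A further slip: for $i\ge 1$ the image $\sR_i$ is \emph{not} locally free; by \eqref{pisom} it contains the nonzero torsion subsheaf $\sP_{i-1}\cong\ker(\sR_i\to\sR_{i-1})$, so your appeal to $\sExt^1(\sR_i,\sO)=0$ is invalid beyond $i=0$.) The missing idea is a pullback construction: dualize the inclusion $\sK_{i+1}\subset\sK_i$ to get $0\to\sK_i\dual\to\sK_{i+1}\dual\to\sExt^1(\sP_i,\sO)\to 0$, observe that the inclusion $\sP_i\cong\ker(\sT_i\to\sT_{i+1})\subset\sT_i$ induces a surjection $\alpha:\sExt^1(\sT_i,\sO)\to\sExt^1(\sP_i,\sO)$, and define $\sG_{i+1}$ as the pullback of $\sK_{i+1}\dual$ along $\alpha$ as in \eqref{cqdiag}; the snake lemma then yields $\sG_{i+1}^{\tor}\cong\sExt^1(\sT_{i+1},\sO)$ and $\sG_{i+1}^{\tf}\cong\sK_{i+1}\dual$, and the top row of that diagram is $\zeta_{i+1}$. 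Non-splitness of $\zeta_{i+1}$, which you leave as an assertion to be checked, then comes for free: if $\zeta_{i+1}$ split, the torsion-free part of $\sG_{i+1}$ would be $\sK_i\dual$, contradicting $\sG_{i+1}^{\tf}\cong\sK_{i+1}\dual$, because $\deg\sK_{i+1}\dual=\deg\sK_i\dual+\length(\sP_i)$ and $\sP_i\neq 0$ (as $\psi_{i+1}\neq 0$).
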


\begin{proof}
Let $(\sF_i,\varphi_i)_m$ be a complete stable pair. Construct a complete quotient $(\rho,\zeta_1,\cdots,\zeta_m)$ as follows.  Consider $\varphi_0:V\otimes \sO_{\bP^1}\to \sF_0$. Taking dual, we get $\varphi_0\dual:\sF_0\dual\to V\dual\otimes \sO_{\bP^1}$. Let $\sG_0$ be its cokernel. We get a quotient $\rho:V\dual\otimes \sO_{\bP^1}\to\sG_0$.  

Let $\sK_i=\ker\varphi_i$. Then a direct verification show that $\sG_0^{\tf}\cong \sK_0\dual$. There are inclusions of locally free sheaves
\[ \sK_m\subset \cdots\subset \sK_1\subset \sK_0\subset V\otimes \sO_{\bP^1}.
\]
Let $\sP_i=\coker(\sK_{i+1}\to\sK_i)$. It is a sheaf with finite support and 
\begin{equation}\label{pisom}
 \sP_i\cong \ker(\sR_{i+1}\to\sR_i)\cong\ker(\sT_i\to\sT_{i+1})
\end{equation}
where $\sR_i=\image \varphi_i$, $\sT_i=\coker\varphi_i$. Taking dual of $\sK_{i+1}\to\sK_i$, we have 
\begin{equation}\label{kext} 
0\to \sK_{i}\dual\to\sK_{i+1}\dual \to \sExt^1(\sP_i,\sO_X)\to 0.
\end{equation}
The second isomorphism in \eqref{pisom} induces a short exact sequence 
\[  0\to \sExt^1(\sT_{i+1},\sO_X)\to \sExt^1(\sT_i,\sO_X)\stackrel{\alpha}\lra \sExt^1(\sP_i,\sO_X)\to 0.
\]
Using $\alpha$ to pullback the exact sequence \eqref{kext}, we get a diagram 
\begin{equation}\label{cqdiag}
\xymatrix{
0\ar[r] 	&\sK_{i}\dual \ar[r] \ar@{=}[d] 	&\sG_{i+1}\ar[r]\ar[d] 	&\sExt^1(\sT_i,\sO_X) \ar[r]\ar[d]^{\alpha} 	&0 \\
0\ar[r] 	&\sK_{i}\dual \ar[r] 		&\sK_{i+1}\dual\ar[r] 	&\sExt^1(\sP_i,\sO_X) \ar[r] 			&0}
\end{equation}
Then by snake lemma, the middle column can be completed as
\[  0\to \sExt^1(\sT_{i+1},\sO_X)\to \sG_{i+1}\to \sK_{i+1}\dual\to 0.
\] 
It follows that $\sG_{i+1}^{\tor}\cong\sExt^1(\sT_{i+1},\sO_X)$ and $\sG_{i+1}^{\tf}\cong \sK_{i+1}\dual$. The top row of the diagram \eqref{cqdiag} becomes an extension $\zeta_{i+1}:　0\to \sG_{i}^{\tf}\to \sG_{i+1}\to \sG_{i}^{\tor}\to 0$. Because $\sT_m=0$, $\sG_m\cong \sK_m\dual$ and it is locally free.  This way we obtain a complete quotient $\{ \rho: V\dual\otimes \sO_{\bP^1}\to \sG_0,\zeta_1,\cdots,\zeta_m\}$. The inverse construction is similar and we omit it.
\end{proof}


\end{document}